\numberwithin{equation}{section}
\theoremstyle{plain}
\newtheorem{thm}{Theorem}[section]
\newtheorem{prop}[thm]{Proposition}
\newtheorem{rem}[thm]{Remark}
\def\X{\mathbb{X}}
\def\R{\mathbb{R}^{n}}
\def\G{\mathbb{G}}
\def\S{\mathfrak{S}}
\begin{document}

\title{Hardy inequalities on metric measure spaces, III: The case $q\leq p<0$ and applications}

\author[A. Kassymov]{Aidyn Kassymov}
\address{
  Aidyn Kassymov:
  \endgraf
   \endgraf
  Department of Mathematics: Analysis, Logic and Discrete Mathematics
  \endgraf
  Ghent University, Belgium
  \endgraf
  and
  \endgraf
  Institute of Mathematics and Mathematical Modeling
  \endgraf
  125 Pushkin str.
  \endgraf
  050010 Almaty
  \endgraf
  Kazakhstan
  \endgraf
  and
  \endgraf
  Al-Farabi Kazakh National University
  \endgraf
   71 Al-Farabi avenue
   \endgraf
   050040 Almaty
   \endgraf
   Kazakhstan
  \endgraf
	{\it E-mail address} {\rm aidyn.kassymov@ugent.be} and {\rm kassymov@math.kz}}

  \author[M. Ruzhansky]{Michael Ruzhansky}
\address{
	Michael Ruzhansky:
	 \endgraf
  Department of Mathematics: Analysis, Logic and Discrete Mathematics
  \endgraf
  Ghent University, Belgium
  \endgraf
  and
  \endgraf
  School of Mathematical Sciences
    \endgraf
    Queen Mary University of London
  \endgraf
  United Kingdom
	\endgraf
  {\it E-mail address} {\rm michael.ruzhansky@ugent.be}}

\author[D. Suragan]{Durvudkhan Suragan}
\address{
	Durvudkhan Suragan:
	\endgraf
	Department of Mathematics
	\endgraf
	School of Science and Technology, Nazarbayev University
    \endgraf
	53 Kabanbay Batyr Ave, Nur-Sultan 010000
	\endgraf
	Kazakhstan
	\endgraf
	{\it E-mail address} {\rm durvudkhan.suragan@nu.edu.kz}}

\thanks{
The first and second authors were supported in parts by the FWO Odysseus 1 grant no. G.0H94.18N: Analysis and Partial Differential Equations, by the Methusalem programme of the Ghent University Special Research Fund (BOF) (grant no. 01M01021) and by the EPSRC grant  EP/R003025/2. Also, this research has been funded by the Science Committee of the Ministry of Science and Higher Education of the Republic of Kazakhstan (Grant No.AP19676031) and  partially supported by the collaborative research program "Qualitative analysis for nonlocal and fractional models" from Nazarbayev University. }

     \keywords{Reverse Hardy inequality,  metric measure space, Reverse Hardy-Littlewood-Sobolev inequality, Reverse Stein-Weiss inequality.}
 \subjclass{22E30, 43A80.}

\begin{abstract}
In this paper, we obtain a reverse version of the integral Hardy inequality on metric measure space with two negative exponents.  Also, as for applications we show the reverse Hardy-Littlewood-Sobolev and the Stein-Weiss  inequalities  with two negative exponents on homogeneous Lie groups and with arbitrary quasi-norm, the result which appears to be new already in the Euclidean space. This work further complements the ranges of $p$ and $q$ (namely, $q\leq p<0$) considered in \cite{RV} and \cite{RV21}, where one treated the cases $1<p\leq q<\infty$ and $p>q$, respectively.
\end{abstract}\maketitle


\section{Introduction}
In the famous work \cite{Har20}, G.H. Hardy showed the following (direct) integral inequality:
\begin{align}
\int_{a}^{\infty}\frac{1}{x^{p}}\left(\int_{a}^{\infty}f(t)dt\right)^{p}dx\leq\left(\frac{p}{p-1}\right)^{p}\int_{a}^{\infty}f^{p}(x)dx,
\end{align}
where $f\geq0$, $p>1$, and $a>0$. The subject of the Hardy inequalities has been extensively investigated and
we refer to the book \cite{KMP}. 

We refer to  direct inequalities \cite{Dav99,DHK,EE04, GKPW04, KMP, KP03, KPS17, OK90} and to the reverse inequalities \cite{BH,GKK,KKK08,KK, Pro}.

 The main goal of this  paper is to extend the reverse Hardy inequalities to general metric measure space with two negative exponents. More specifically, we consider metric spaces $\mathbb X$ with a Borel measure $dx$ allowing for the following {\em polar decomposition} at $a\in{\mathbb X}$: we assume that there is a locally integrable function $\lambda \in L^1_{loc}$  such that for all $f\in L^1(\mathbb X)$ we have
   \begin{align}\label{EQ:polarintro}
   \int_{\mathbb X}f(x)dx= \int_0^{\infty}\int_{\Sigma_{r}} f(r,\omega) \lambda(r,\omega) d\omega_{r} dr,
   \end{align}
    for some set $\Sigma_{r}=\{x\in\mathbb{X}:d(x,a)=r\}\subset \mathbb X$ with a measure on it denoted by $d\omega$, and $(r,\omega)\rightarrow a $ as $r\rightarrow0$.

 The condition (1.2) is rather general (see \cite{RV}) since we allow the function $\lambda$ to depend on the whole variable $x=(r,\omega)$. Since $\mathbb X$ does not necessarily have a differentiable structure, the function $\lambda(r,\omega)$ can not be in general obtained as the Jacobian of the polar change of coordinates. However, if such a differentiable structure exists on $\mathbb X$, the condition (1.2) can be obtained as the standard polar decomposition formula.
In particular, let us give several examples of $\mathbb X$ for which the condition (1.2) is satisfied with different expressions for $ \lambda (r,\omega)$:

\begin{itemize}
\item[(I)] Euclidean space $\mathbb{R}^{n}$: $ \lambda (r,\omega)= {r}^{n-1}.$
\item[(II)] Homogeneous groups: $ \lambda (r,\omega)= {r}^{Q-1}$, where $Q$ is the homogeneous dimension of the group. Such groups have been consistently developed by Folland and Stein \cite{FS1}, see also an up-to-date exposition in \cite{FR} and \cite{RY18b}.
\item[(III)] Hyperbolic spaces $\mathbb H^n$:  $\lambda(r,\omega)=(\sinh {r})^{n-1}$.
\item[(IV)] Cartan-Hadamard manifolds: Let $K_M$ be the sectional curvature on $(M, g).$ A Riemannian manifold $(M, g)$ is called {\em a Cartan-Hadamard manifold} if it is complete, simply connected and has non-positive sectional curvature, i.e., the sectional curvature $K_M\le 0$ along each plane section at each point of $M$. Let us fix a point $a\in M$ and denote by
$\rho(x)=d(x,a)$ the geodesic distance from $x$ to $a$ on $M$. The exponential map ${\rm exp}_a :T_a M \to  M$ is a diffeomorphism, see e.g. Helgason \cite{DV3}.  Let $J(\rho,\omega)$ be the density function on $M$, see e.g. \cite{DV1}. Then we have the following polar decomposition:
$$
\int_M f(x) dx=\int_0^{\infty}\int_{\mathbb S^{n-1}}f({\rm exp}_{a}(\rho \omega))J(\rho,\omega) \rho^{n-1}d\rho d\omega,
$$
so that we have (1.2) with $\lambda(\rho,\omega)= J(\rho,\omega) \rho^{n-1}.$
\end{itemize}
 In \cite{RV} and \cite{RV21}, the (direct) integral Hardy inequality on metric measure spaces was established with applications to homogeneous Lie groups, hyperbolic spaces, Cartan-Hadamard manifolds with negative curvature and on general Lie groups with Riemannian distance for  $1<p\leq q<\infty$ and $p>q$, respectively. Also, in \cite{KRSin}, the authors showed the integral Hardy inequality for $p\in(0,1)$ and $q<0$ on metric measure space.   In this paper, we continue the investigation of the integral Hardy inequality on a metric measure space, i.e., we show the reverse integral Hardy inequality with negative exponents. 

In \cite{HL28}, Hardy and Littlewood considered  the one dimensional fractional integral operator on $(0,\infty)$ given by
\begin{equation}\label{1Doper}
T_{\lambda}u(x)=\int_{0}^{\infty}\frac{u(y)}{|x-y|^{\lambda}}dy,\,\,\,\,0<\lambda<1,
\end{equation}
where they also showed the following $L^{q}-L^{p}$ boundedness of this operator $T_{\lambda}$:
\begin{thm}\label{1DHLS28}
Let $1<p<q<\infty$ and $u\in L^{p}(0,\infty)$  with $\frac{1}{q}=\frac{1}{p}+\lambda-1$. Then
\begin{equation}
\|T_{\lambda}u\|_{L^{q}(0,\infty)}\leq C \|u\|_{L^{p}(0,\infty)},
\end{equation}
where $C$ is a positive constant independent of $u$.
\end{thm}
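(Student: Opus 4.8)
The plan is to deduce the strong-type $(p,q)$ bound from a pointwise Hedberg-type inequality combined with the boundedness of the Hardy--Littlewood maximal operator on $L^{p}$. Since the kernel $|x-y|^{-\lambda}$ is nonnegative we may assume $u\geq 0$, so that $T_{\lambda}u\leq T_{\lambda}|u|$ pointwise. Write $Mu$ for the Hardy--Littlewood maximal function of $u$ (extended by zero to $\mathbb{R}$) and let $p'$ be the conjugate exponent of $p$. First I would record that the scaling hypothesis $\frac{1}{q}=\frac{1}{p}+\lambda-1$ together with $1<p<q<\infty$ forces $\lambda>1-\frac{1}{p}=\frac{1}{p'}$, equivalently $\lambda p'>1$; this single inequality is exactly what makes the argument run.

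Next, for a free parameter $R>0$ I split the kernel at distance $R$ and estimate the two pieces separately. For the near part $\int_{|x-y|<R}u(y)\,|x-y|^{-\lambda}\,dy$, a dyadic decomposition of the region $|x-y|<R$ into annuli $2^{-k-1}R\leq|x-y|<2^{-k}R$ and averaging over each annulus gives the bound $C\,R^{1-\lambda}\,Mu(x)$. For the far part $\int_{|x-y|>R}u(y)\,|x-y|^{-\lambda}\,dy$, Hölder's inequality yields at most $\|u\|_{L^{p}}\big(\int_{|x-y|>R}|x-y|^{-\lambda p'}\,dy\big)^{1/p'}$, and the inner integral converges precisely because $\lambda p'>1$, equalling a constant multiple of $R^{1/p'-\lambda}$. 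Combining the two pieces gives $T_{\lambda}u(x)\leq C\big(R^{1-\lambda}Mu(x)+R^{1/p'-\lambda}\|u\|_{L^{p}}\big)$ for every $R>0$.

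Optimizing over $R>0$ by balancing the two powers of $R$, which occurs at $R=(\|u\|_{L^{p}}/Mu(x))^{p}$, collapses this to the pointwise inequality $T_{\lambda}u(x)\leq C\,(Mu(x))^{p/q}\,\|u\|_{L^{p}}^{1-p/q}$; here the exponent identities $1-p(1-\lambda)=p/q$ and $p(1-\lambda)=1-p/q$ are simply the hypothesis $\frac{1}{q}=\frac{1}{p}+\lambda-1$ rewritten. Raising to the power $q$ and integrating in $x$ then gives $\|T_{\lambda}u\|_{L^{q}}^{q}\leq C\,\|u\|_{L^{p}}^{q-p}\,\|Mu\|_{L^{p}}^{p}$, and since $p>1$ the Hardy--Littlewood maximal theorem supplies $\|Mu\|_{L^{p}}\leq C\|u\|_{L^{p}}$, so that taking the $q$-th root yields the claimed estimate $\|T_{\lambda}u\|_{L^{q}}\leq C\|u\|_{L^{p}}$.

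The step I expect to require the most care is the exponent bookkeeping: verifying that the far-part integral converges, i.e. that $\lambda p'>1$ is genuinely equivalent to positivity of $1/q$, and that the balanced power of $R$ reproduces exactly the exponent $p/q$ rather than something close to it. An alternative route, avoiding the maximal operator, is to establish the weak-type endpoint bounds for $T_{\lambda}$ via the distribution-function (layer-cake) method and then invoke the Marcinkiewicz interpolation theorem; the Hedberg-type approach sketched above is more direct and makes the role of the scaling condition $\frac{1}{q}=\frac{1}{p}+\lambda-1$ completely transparent.
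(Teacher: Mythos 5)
Your proof is correct, but there is nothing in the paper to compare it against: Theorem 1.1 is stated in the introduction purely as classical background, quoted from Hardy and Littlewood \cite{HL28}, and the paper never proves it --- its own results concern the reverse inequalities with negative exponents. Your argument is the standard Hedberg-type proof of the fractional integration theorem, and the details check out: the hypothesis $\frac{1}{q}=\frac{1}{p}+\lambda-1$ with $q<\infty$ gives $\frac{1}{q}>0$, hence $\lambda>\frac{1}{p'}$, i.e.\ $\lambda p'>1$, which is exactly what makes the far part $\|u\|_{L^{p}}\bigl(\int_{|x-y|>R}|x-y|^{-\lambda p'}dy\bigr)^{1/p'}\le C R^{1/p'-\lambda}\|u\|_{L^{p}}$ finite; the near part needs only $\lambda<1$ for the geometric series over dyadic annuli to converge, giving $CR^{1-\lambda}Mu(x)$; and balancing at $R=(\|u\|_{L^{p}}/Mu(x))^{p}$ yields $T_{\lambda}u(x)\le C(Mu(x))^{p/q}\|u\|_{L^{p}}^{1-p/q}$, since $1-p(1-\lambda)=p/q$ is indeed just a rewriting of the scaling hypothesis. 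Integrating the $q$-th power and invoking the Hardy--Littlewood maximal theorem (valid because $p>1$) finishes the proof. Two small points you should make explicit: dispose of the degenerate cases $Mu(x)\in\{0,\infty\}$ and $\|u\|_{L^{p}}=0$ before optimizing in $R$, and note that extending $u$ by zero to $\mathbb{R}$ so that $Mu$ is the usual maximal function loses nothing. The alternative route you mention (weak-type endpoint bounds plus Marcinkiewicz interpolation) is closer in spirit to the original 1928 argument; your Hedberg approach is the shorter modern one and makes the role of the exponent relation transparent.
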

The multi-dimensional analogue of (1.3) can be represented by the formula:
\begin{equation}\label{NDoper}
I_{\lambda}u(x)=\int_{\mathbb{R}^{N}}\frac{u(y)}{|x-y|^{\lambda}}dy,\,\,\,\,0<\lambda<N.
\end{equation}
In  \cite{Sob38},  Sobolev generalised  Theorem 1.1 for multi-dimensional case in the following form:
\begin{thm}\label{THM:HLS}
Let $1<p<q<\infty$, $u\in L^{p}(\mathbb{R}^{N})$  with $\frac{1}{q}=\frac{1}{p}+\frac{\lambda}{N}-1$. Then
\begin{equation}
\|I_{\lambda}u\|_{L^{q}(\mathbb{R}^{N})}\leq C \|u\|_{L^{p}(\mathbb{R}^{N})},
\end{equation}
where $C$ is a positive constant independent of $u$.
\end{thm}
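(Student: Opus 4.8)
The plan is to deduce the inequality from the pointwise estimate of Hedberg together with the boundedness of the Hardy--Littlewood maximal operator $M$ on $L^p(\mathbb{R}^N)$. Since replacing $u$ by $|u|$ only enlarges the left-hand side, I may assume $u\geq 0$. A preliminary scaling check is instructive: writing $u_t(y)=u(ty)$ one finds $I_\lambda u_t(x)=t^{\lambda-N}(I_\lambda u)(tx)$, and demanding that the bound $\|I_\lambda u_t\|_{L^q}\leq C\|u_t\|_{L^p}$ be independent of $t$ forces precisely $\tfrac1q=\tfrac1p+\tfrac\lambda N-1$. This confirms that the stated exponent relation is the only admissible one and points to a proof by balancing two competing scales.

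First I would fix $x\in\mathbb{R}^N$ and a radius $R>0$ and split
$$I_\lambda u(x)=\int_{|x-y|<R}\frac{u(y)}{|x-y|^{\lambda}}\,dy+\int_{|x-y|\geq R}\frac{u(y)}{|x-y|^{\lambda}}\,dy=:A_R(x)+B_R(x).$$
For $A_R$ I would decompose the ball $\{|x-y|<R\}$ into the dyadic annuli $2^{-k-1}R\leq|x-y|<2^{-k}R$, $k\geq 0$. On each annulus the kernel is comparable to $(2^{-k}R)^{-\lambda}$, while $\int_{|x-y|<2^{-k}R}u\,dy\leq C(2^{-k}R)^{N}Mu(x)$ by definition of the maximal function; summing the resulting geometric series, which converges because $N-\lambda>0$, gives $A_R(x)\leq C\,R^{N-\lambda}\,Mu(x)$. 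For $B_R$ I would apply Hölder's inequality with exponents $p$ and $p'$,
$$B_R(x)\leq \|u\|_{L^p(\mathbb{R}^N)}\left(\int_{|x-y|\geq R}|x-y|^{-\lambda p'}\,dy\right)^{1/p'}.$$
The constraint $\tfrac1q=\tfrac1p+\tfrac\lambda N-1$ together with $q<\infty$ yields $\lambda p'>N$, so the tail integral converges and equals a constant multiple of $R^{N/p'-\lambda}$; hence $B_R(x)\leq C\,R^{N/p'-\lambda}\,\|u\|_{L^p(\mathbb{R}^N)}$.

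Since $N-\lambda>0$ and $N/p'-\lambda<0$, I would then optimize by choosing $R=\big(\|u\|_{L^p}/Mu(x)\big)^{p/N}$, which balances the two contributions and yields the Hedberg pointwise bound
$$I_\lambda u(x)\leq C\,\big(Mu(x)\big)^{p/q}\,\|u\|_{L^p(\mathbb{R}^N)}^{1-p/q},$$
the exponents being forced by the Sobolev relation. Raising this to the power $q$, integrating in $x$, and using $\big(1-\tfrac pq\big)q=q-p$ gives
$$\|I_\lambda u\|_{L^q(\mathbb{R}^N)}^{q}\leq C\,\|u\|_{L^p(\mathbb{R}^N)}^{q-p}\int_{\mathbb{R}^N}\big(Mu(x)\big)^{p}\,dx.$$
Finally, since $p>1$, the Hardy--Littlewood maximal theorem gives $\|Mu\|_{L^p}\leq C\|u\|_{L^p}$, so the last integral is at most $C\|u\|_{L^p}^{p}$ and the desired estimate follows. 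The genuinely deep ingredient, and hence the main obstacle, is the $L^p$-boundedness of the maximal operator, which I would invoke as a standard fact; the only other delicate point is the convergence of the tail integral in the estimate of $B_R$, which is exactly where the hypotheses $p>1$ and $q<\infty$ enter (through $1<p'<\infty$ and $\lambda p'>N$).
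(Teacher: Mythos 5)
Your proof is correct, but there is nothing in the paper to compare it against: Theorem 1.2 is the classical Hardy--Littlewood--Sobolev (Sobolev) inequality, quoted in the introduction from \cite{Sob38} purely as historical background for the reverse inequalities that are the paper's actual subject, and the paper gives no proof of it. On its own merits, your argument is the standard Hedberg maximal-function proof, and every step checks out under the paper's convention (kernel $|x-y|^{-\lambda}$ with $0<\lambda<N$, so that the exponent relation reads $\frac{\lambda}{N}=\frac{1}{p'}+\frac{1}{q}$): the dyadic-annuli estimate $A_R(x)\leq C R^{N-\lambda}Mu(x)$ uses $N-\lambda>0$; the tail estimate $B_R(x)\leq C R^{N/p'-\lambda}\|u\|_{L^p(\mathbb{R}^N)}$ uses $\lambda p'=N\left(1+p'/q\right)>N$, which indeed follows from the exponent relation and $q<\infty$; the balancing choice $R=\left(\|u\|_{L^p}/Mu(x)\right)^{p/N}$ gives the pointwise bound $I_\lambda u(x)\leq C\left(Mu(x)\right)^{p/q}\|u\|_{L^p(\mathbb{R}^N)}^{1-p/q}$ because $1-\frac{p(N-\lambda)}{N}=\frac{p}{q}$; and the conclusion uses the $L^p$-boundedness of $M$, which is exactly where $p>1$ enters. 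Two small points you should make explicit for completeness: the optimization in $R$ is legitimate only at points where $0<Mu(x)<\infty$, which holds a.e. once $u\in L^p(\mathbb{R}^N)$ is not identically zero (the case $u\equiv 0$ being trivial); and the reduction to $u\geq0$ should be phrased as replacing $u$ by $|u|$, using $|I_\lambda u|\leq I_\lambda |u|$ pointwise since the kernel is positive. Finally, note what this route buys and what it gives up: it is short, self-contained modulo the maximal theorem, and works for the full range $1<p<q<\infty$, but it cannot produce the sharp constant (for that one needs the rearrangement arguments of \cite{Lie83}); since the statement only asserts the existence of some constant $C$, that loss is irrelevant here.
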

In \cite{StWe58}, Stein and Weiss obtained the following radially weighted Hardy-Littlewood-Sobolev inequality, which is known as the Stein-Weiss inequality. 
\begin{thm}\label{Classiacal_Stein-Weiss_inequality}
Let $0<\lambda<N$, $1<p<\infty$, $\alpha<\frac{N(p-1)}{p}$, $\beta<\frac{N}{q}$, $\alpha+\beta\geq0$ and $\frac{1}{q}=\frac{1}{p}+\frac{\lambda+\alpha+\beta}{N}-1$. If $1<p\leq q<\infty$, then
\begin{equation}\label{stein-weissclas}
\||x|^{-\beta}I_{\lambda}u\|_{L^{q}(\mathbb{R}^{N})}\leq C \||x|^{\alpha}u\|_{L^{p}(\mathbb{R}^{N})},
\end{equation}
where $C$ is a positive constant independent of $u$.
\end{thm}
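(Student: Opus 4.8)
The plan is to reduce the statement to the (unweighted) Hardy--Littlewood--Sobolev inequality of Theorem~\ref{THM:HLS} together with a pair of weighted Hardy inequalities. Writing $v=|x|^{\alpha}u$, it suffices to prove that the integral operator
\[
Tv(x)=\int_{\mathbb{R}^{N}}\frac{v(y)}{|x|^{\beta}\,|x-y|^{\lambda}\,|y|^{\alpha}}\,dy
\]
is bounded from $L^{p}(\mathbb{R}^{N})$ to $L^{q}(\mathbb{R}^{N})$; its kernel $K(x,y)=|x|^{-\beta}|x-y|^{-\lambda}|y|^{-\alpha}$ is homogeneous of degree $-(\lambda+\alpha+\beta)$, and a direct scaling computation shows the balance condition $\frac1q=\frac1p+\frac{\lambda+\alpha+\beta}{N}-1$ is exactly the relation forcing the $L^{p}\to L^{q}$ norm to be dilation invariant. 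First I would split the $y$-integration, for each fixed $x$, according to the trichotomy $|y|\le|x|/2$, $|x|\le|y|/2$, and $|x|/2<|y|<2|x|$, writing $T=T_{1}+T_{2}+T_{3}$ accordingly.

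On the first region $|y|\le|x|/2$ one has $|x-y|\approx|x|$, so $K\approx|x|^{-\beta-\lambda}|y|^{-\alpha}$ and $T_{1}$ becomes the weighted Hardy operator $v\mapsto|x|^{-\beta-\lambda}\int_{|y|\le|x|/2}|y|^{-\alpha}v(y)\,dy$; the inner integral converges against $L^{p}$ precisely when $\alpha p'<N$, that is $\alpha<\frac{N(p-1)}{p}$. Symmetrically, on $|x|\le|y|/2$ one has $|x-y|\approx|y|$ and $T_{2}$ is a dual weighted Hardy operator whose boundedness, seen by pairing with $g\in L^{q'}$, requires $\beta q<N$, that is $\beta<\frac{N}{q}$. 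In both cases the passage from the pointwise H\"older bound to the full $L^{p}\to L^{q}$ estimate is a classical power-weighted Hardy inequality, which after reduction to radial integrals via polar coordinates is the one-dimensional Hardy inequality and holds under the stated exponent constraints together with the balance condition.

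The heart of the argument, and the main obstacle, is the diagonal region $|x|/2<|y|<2|x|$, where the singularity of $|x-y|^{-\lambda}$ is active and cannot be absorbed into the weights. Here $|x|\approx|y|$, so $K\approx|x|^{-(\alpha+\beta)}|x-y|^{-\lambda}$, and the hypothesis $\alpha+\beta\ge0$ guarantees that this combined weight adds no further singularity at $x=y$. The idea is a dyadic decomposition $A_{k}=\{2^{k}\le|x|<2^{k+1}\}$: on $A_{k}$ the weight is $\approx2^{-k(\alpha+\beta)}$ and the relevant $y$ lie in $\widetilde A_{k}=A_{k-1}\cup A_{k}\cup A_{k+1}$. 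Introducing the intermediate exponent $\tilde q$ with $\frac1{\tilde q}=\frac1p+\frac\lambda N-1$, Theorem~\ref{THM:HLS} gives $\|I_{\lambda}(v\mathbf{1}_{\widetilde A_{k}})\|_{L^{\tilde q}}\lesssim\|v\|_{L^{p}(\widetilde A_{k})}$, while H\"older on the annulus $A_{k}$ (legitimate because $\alpha+\beta\ge0$ forces $q\le\tilde q$) costs exactly the factor $|A_k|^{(\alpha+\beta)/N}\approx2^{k(\alpha+\beta)}$ dictated by the balance condition, cancelling the weight. This yields the localized bound $\|T_{3}v\|_{L^{q}(A_{k})}\lesssim\|v\|_{L^{p}(\widetilde A_{k})}$ uniformly in $k$.

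Finally I would sum in $k$. Since $q\ge p$ one has the embedding $\ell^{p}\hookrightarrow\ell^{q}$, so
\[
\|T_{3}v\|_{L^{q}(\mathbb{R}^{N})}=\Big(\sum_{k}\|T_{3}v\|_{L^{q}(A_{k})}^{q}\Big)^{1/q}\lesssim\Big(\sum_{k}\|v\|_{L^{p}(\widetilde A_{k})}^{q}\Big)^{1/q}\le\Big(\sum_{k}\|v\|_{L^{p}(\widetilde A_{k})}^{p}\Big)^{1/p}\lesssim\|v\|_{L^{p}(\mathbb{R}^{N})},
\]
the last step using the bounded overlap of the $\widetilde A_{k}$. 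It is precisely this summation that makes the restriction $p\le q$ indispensable: without it the dyadic sum need not converge. Combining the three regional estimates gives $\|Tv\|_{q}\lesssim\|v\|_{p}$, which is the claimed Stein--Weiss inequality.
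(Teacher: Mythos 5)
The paper itself contains no proof of this statement: Theorem~\ref{Classiacal_Stein-Weiss_inequality} is background material, quoted from Stein and Weiss \cite{StWe58}, so your argument can only be judged on its own merits rather than against an internal proof. On those merits, your overall scheme is the classical one and most of it is sound: the reduction to the operator $T$ with kernel $|x|^{-\beta}|x-y|^{-\lambda}|y|^{-\alpha}$, the treatment of the two off-diagonal regions by power-weighted Hardy inequalities (whose Muckenhoupt quantities are scale-invariant exactly because of the balance condition, and finite exactly because $\alpha<\frac{N(p-1)}{p}$ and $\beta<\frac{N}{q}$; the companion requirements $(\beta+\lambda)q>N$ and $(\alpha+\lambda)p'>N$ are consequences of these via the balance condition), and the final $\ell^{p}\hookrightarrow\ell^{q}$ summation, which correctly isolates where $p\le q$ is used.

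The genuine gap is in the diagonal region. Your intermediate exponent $\tilde q$, defined by $\frac{1}{\tilde q}=\frac1p+\frac{\lambda}{N}-1$, need not be a Lebesgue exponent: Theorem~\ref{THM:HLS} requires $\frac1p+\frac{\lambda}{N}-1>0$, i.e.\ $\lambda>N/p'$, and by the balance condition this is equivalent to $\alpha+\beta<\frac{N}{q}$, which the hypotheses do not guarantee. Concretely, take $N=1$, $p=q=2$, $\lambda=\frac{1}{10}$, $\alpha=\beta=\frac{9}{20}$: every hypothesis of the theorem holds, yet $\frac{1}{\tilde q}=\frac12+\frac1{10}-1=-\frac25<0$, so the appeal to Theorem~\ref{THM:HLS} is vacuous and your bound for $T_{3}$ collapses in this admissible range. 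The repair is to avoid HLS on the diagonal blocks: for $x\in A_{k}$ and $y\in\tilde A_{k}$ one has $|x-y|\le 2^{k+3}$, so on $A_{k}$ the operator $T_{3}$ is dominated by $2^{-k(\alpha+\beta)}$ times convolution of $v\chi_{\tilde A_{k}}$ with the truncated kernel $|z|^{-\lambda}\chi_{\{|z|\le 2^{k+3}\}}$. Young's inequality with $1+\frac1q=\frac1p+\frac1r$ (a legitimate $r\ge1$ since $p\le q$) gives
\begin{equation*}
\|T_{3}v\|_{L^{q}(A_{k})}\lesssim 2^{-k(\alpha+\beta)}\left\||z|^{-\lambda}\chi_{\{|z|\le 2^{k+3}\}}\right\|_{L^{r}}\|v\|_{L^{p}(\tilde A_{k})}\approx 2^{-k(\alpha+\beta)}\,2^{k(N/r-\lambda)}\,\|v\|_{L^{p}(\tilde A_{k})}=\|v\|_{L^{p}(\tilde A_{k})},
\end{equation*}
since $\frac{N}{r}-\lambda=\alpha+\beta$ by the balance condition; the integrability requirement $\lambda r<N$ is equivalent (again by balance) to $\alpha+\beta>0$. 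In the remaining case $\alpha+\beta=0$, the balance condition together with $q<\infty$ forces $\lambda>N/p'$ and $\tilde q=q>p$, so there your HLS argument is valid as written. With this case distinction the diagonal estimate, and hence the whole proof, goes through; without it, the proof as submitted does not cover all admissible parameters.
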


 To the best of our knowledge,   the Hardy-Littlewood-Sobolev inequality on the Heisenberg group was proved by Folland and Stein in \cite{FS74} and the best constants of the Hardy-Littlewood-Sobolev inequality, in the Euclidean space and Heisenberg group were obtained in \cite{Lie83} and \cite{FL12}, respectively.  Also, in \cite{HLZ}, \cite{RY18b} and \cite{KRS}, the authors studied the Hardy-Littlewood-Sobolev and the Stein-Weiss inequalities on Heisenberg and homogeneous Lie groups. Note that  systematic studies of different functional inequalities on general homogeneous (Lie)
groups were initiated by the papers \cite{ORS,RS17, RSY18,  RY18a}.

The reverse Stein-Weiss inequality in Euclidean setting has the following form:
\begin{thm}[\cite{CLT1}, Theorem 1]
For $n\geq1, p\in(0,1), q<0, \lambda>0, 0\leq\alpha<-\frac{n}{q}$, and $0\leq \beta<-\frac{n}{p'}$ satisfying $\frac{1}{p}+\frac{1}{q'}-\frac{\alpha+\beta+\lambda}{n}=2$, there is a constant $C=C(n,\alpha,\beta,\lambda,p,q)>0$ such that for any non-negative functions $f\in L^{q'}(\mathbb{R}^{n})$ and $0<\int_{\mathbb{R}^{n}}g^{p}(y)dy<\infty$, we have
\begin{equation}\label{cltst1}
\int_{\mathbb{R}^{n}}\int_{\mathbb{R}^{n}}|x|^{\alpha}|x-y|^{\lambda}f(x)g(y)|y|^{\beta}dydx\geq C\left(\int_{\mathbb{R}^{n}}f^{q'}(x)dx\right)^{\frac{1}{q'}}\left(\int_{\mathbb{R}^{n}}g^{p}(y)dy\right)^{\frac{1}{p}},
\end{equation}
where $\frac{1}{q}+\frac{1}{q'}=1$ and $\frac{1}{p}+\frac{1}{p'}=1$.
\end{thm}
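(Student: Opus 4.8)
The plan is to peel off the two integrations one at a time by means of the reverse Hölder inequality, reducing the bilinear estimate first to a reverse boundedness statement for the associated Stein--Weiss operator and finally to a one-dimensional reverse Hardy inequality. Write the left-hand side as $I=\int_{\R} f(x)\,|x|^{\alpha} G(x)\,dx$, where $G(x)=\int_{\R}|x-y|^{\lambda}|y|^{\beta}g(y)\,dy$. Since $q<0$ forces the conjugate exponent $q'$ to lie in $(0,1)$, the reverse Hölder inequality for nonnegative functions (one exponent in $(0,1)$, the other its negative conjugate $q$) gives $I\ge \|f\|_{L^{q'}(\R)}\,\big\|\,|x|^{\alpha}G\,\big\|_{L^{q}(\R)}$. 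Thus it suffices to prove the reverse bound $\big\|\,|x|^{\alpha}G\,\big\|_{L^{q}(\R)}\ge C\,\|g\|_{L^{p}(\R)}$, which is homogeneous of degree one in $g$, so I may normalise $\|g\|_{L^p}=1$.

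Because $q<0$ the map $t\mapsto t^{q}$ is decreasing, so controlling $\||x|^{\alpha}G\|_{L^q}$ from below amounts to bounding $G(x)$ from below for every $x$. Here the genuine difficulty surfaces: the kernel $|x-y|^{\lambda}$ vanishes on the diagonal, so no pointwise separated lower bound of the form $|x-y|^{\lambda}\ge c(|x|^{\lambda}+|y|^{\lambda})$ can hold. Instead I would split the $y$-integral into $\{|y|\le |x|/2\}$, $\{|y|\ge 2|x|\}$ and the intermediate annulus. On the first two regions one has $|x-y|\ge \tfrac12\max(|x|,|y|)$, whence $|x-y|^{\lambda}\ge 2^{-\lambda}|x|^{\lambda}$ and $|x-y|^{\lambda}\ge 2^{-\lambda}|y|^{\lambda}$ respectively. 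Discarding the annulus (legitimate since the integrand is nonnegative) yields $G(x)\ge 2^{-\lambda}\big(|x|^{\lambda}\int_{|y|\le|x|/2}|y|^{\beta}g\,dy+\int_{|y|\ge2|x|}|y|^{\beta+\lambda}g\,dy\big)$, a sum of a local and a tail weighted Hardy average that both depend on $x$ only through $|x|$.

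Passing to polar coordinates then reduces the required reverse bound to a one-dimensional reverse weighted Hardy inequality for the sum of these two mutually conjugate averaging operators, with exponents $p\in(0,1)$ and $q<0$ and power weights read off from $\alpha,\beta,\lambda$. The hypotheses enter exactly at this point: $\alpha<-\tfrac{n}{q}$ and $\beta<-\tfrac{n}{p'}$ guarantee convergence of the relevant weighted power integrals at the origin (and together with $\lambda>0$ at infinity), while the balance condition $\tfrac1p+\tfrac1{q'}-\tfrac{\alpha+\beta+\lambda}{n}=2$ makes the whole inequality scale-invariant, which is what permits a dimensionless constant $C$. I expect this last, one-dimensional step to be the main obstacle. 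The crucial subtlety is that one must retain \emph{both} the local and the tail operator: keeping only a single region produces a divergent, and hence useless, estimate, as a test with $g=\mathbf 1_{B(0,R)}$ shows, so the two contributions have to be balanced against each other. I would establish the one-dimensional inequality through the reverse-Hardy criterion for negative exponents --- testing the weighted averaging operators on power functions and exploiting the scale invariance --- which is precisely the one-dimensional specialisation of the reverse integral Hardy inequality on metric measure spaces that drives this paper.
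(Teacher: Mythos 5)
You should first be aware that the paper itself does not prove this statement: it is quoted verbatim from \cite{CLT1} as background, and the paper's own machinery (Theorems 2.2 and 2.4 and their applications in Section 3) is built for the regime $q\leq p<0$, with the regime $p\in(0,1)$, $q<0$ of the present statement delegated to \cite{KRSin} and \cite{KRS2}. Your opening moves do mirror the strategy the paper uses for its own analogues: the reverse H\"older step reducing the bilinear form to $\left\||x|^{\alpha}G\right\|_{L^{q}}\geq C\|g\|_{L^{p}}$ is sound, and your observation that neither the ball region nor the tail region alone can suffice here is correct (indeed, with $u(x)=|x|^{(\alpha+\lambda)q}$ one has $n+(\alpha+\lambda)q=q\left(-\frac{n}{p'}-\beta\right)<0$ under the stated hypotheses, so $\int_{B(0,r)}u\,dy=\infty$ and the quantity $D_{1}$ of Theorem 2.2 vanishes).

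However, the reduction you then commit to is not merely unproved at its final step; its target is false, so no one-dimensional Hardy analysis can rescue it. After discarding the annulus you need $\|F_{1}+F_{2}\|_{L^{q}}\geq C\|g\|_{L^{p}}$, where $F_{1}(x)=|x|^{\alpha+\lambda}\int_{\{|y|\leq|x|/2\}}|y|^{\beta}g\,dy$ and $F_{2}(x)=|x|^{\alpha}\int_{\{|y|\geq2|x|\}}|y|^{\beta+\lambda}g\,dy$. Because $q<0$, the functional $F\mapsto\left(\int F^{q}\right)^{1/q}$ equals zero as soon as $F$ vanishes on a set of positive measure. Take $g=\chi_{\{1\leq|y|\leq2\}}$: then $F_{1}\equiv0$ on $\{|x|<2\}$ and $F_{2}\equiv0$ on $\{|x|>1\}$, so $F_{1}+F_{2}\equiv0$ on the annulus $\{1<|x|<2\}$, giving $\int_{\mathbb{R}^{n}}(F_{1}+F_{2})^{q}dx=\infty$ and $\|F_{1}+F_{2}\|_{L^{q}}=0$, while $\|g\|_{L^{p}}>0$ because $p>0$. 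This is exactly the point where the regime of the statement differs from the paper's: for $p<0$ (the paper's own theorems) any $g$ vanishing on a set of positive measure forces the right-hand side $\left(\int g^{p}v\right)^{1/p}$ to vanish as well, so discarding regions of the $y$-integral is harmless there, whereas for $p\in(0,1)$ it is fatal. A correct proof must minorize the kernel by something that is positive for almost every $y$ --- for instance $|x-y|^{\lambda}\geq\bigl||x|-|y|\bigr|^{\lambda}$ --- and then establish a reverse inequality for the resulting operator, which genuinely couples the contributions from $|y|<|x|$ and $|y|>|x|$ and is not a special case of the single-operator reverse Hardy criteria of this paper or of \cite{KRSin}; that coupling is where the real work of \cite{CLT1} lies, and your proposal does not contain it.
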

Note,  we obtain the reverse Hardy-Littlewood-Sobolev inequality if $\alpha=\beta=0$. Improved Stein-Weiss inequality was obtained in \cite{CLT} on the Euclidean upper half-space and in \cite{KRS2} on homogeneous Lie groups.  For more results about the reverse Hardy–Littlewood–Sobolev  inequality in Euclidean space, we refer the reader to  \cite{B15} \cite{CDDFF}, \cite{JZ}, \cite{NN}   and the references therein.  Note that the reverse  Hardy-Littlewood-Sobolev and Stein-Weiss inequalities were shown in \cite{KRS2} for the case $p\in(0,1)$ and $q<0$. In this paper, we show the reverse Hardy-Littlewood-Sobolev and  Stein-Weiss  inequalities with two negative exponents i.e., $q<p<0$, which is also new in the Euclidean space.
\section{Main result}
 
Firstly, let us denote by $B(a, r)$ a ball in $\X$ with centre $a$ and radius $r$, i.e.,
$$B(a, r) := \{x \in \X : d(x,a) < r\},$$
where $d$ is the metric on $\X$. Once and for all let us fix some point $a \in \X$, and denote
\begin{equation}
|x|_{a}:= d(a, x).
\end{equation}
Let us recall briefly the reverse H\"{o}lder inequality.
\begin{thm}[\cite{AF}, Theorem 2.12, p. 27]\label{Hol}
Let $p<0$, so that $p'=\frac{p}{p-1}>0$. If non-negative functions satisfy $0<\int_{\X}f^{p}(x)dx<+\infty$ and $0<\int_{\X}g^{p'}(x)dx<+\infty,$ we have
\begin{equation}\label{Holin}
\int_{\X}f(x)g(x)dx\geq\left(\int_{\X}f^{p}(x)dx\right)^{\frac{1}{p}}\left(\int_{\X}g^{p'}(x)dx\right)^{\frac{1}{p'}}.
\end{equation}
\end{thm}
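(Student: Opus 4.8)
The plan is to deduce this reverse inequality from the classical (forward) Hölder inequality by a substitution that exploits the fact that, for $p<0$, the conjugate exponent $p'=\frac{p}{p-1}$ lies in $(0,1)$. Indeed, from $\frac{1}{p}+\frac{1}{p'}=1$ one checks that $p'=\frac{|p|}{|p|+1}\in(0,1)$, so that $r:=\frac{1}{p'}>1$ is an admissible Hölder exponent whose conjugate $s:=\frac{1}{1-p'}>1$ satisfies $\frac{1}{r}+\frac{1}{s}=p'+(1-p')=1$. These will be the exponents fed into the forward inequality.

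First I would dispose of the degenerate cases. Since $p<0$, finiteness of $\int_{\X}f^{p}\,dx$ forces $f>0$ almost everywhere, for otherwise $f^{p}=+\infty$ on a set of positive measure and the integral would diverge. Likewise $0<\int_{\X}g^{p'}\,dx$ rules out $g\equiv 0$, so $fg>0$ on a set of positive measure and $\int_{\X}fg\,dx>0$. If $\int_{\X}fg\,dx=+\infty$ the asserted inequality is trivial, because its right-hand side is finite by hypothesis; hence I may assume $\int_{\X}fg\,dx<\infty$.

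The key algebraic step is the pointwise identity $g^{p'}=(fg)^{p'}\,f^{-p'}$, valid a.e.\ since $f>0$. Applying the forward Hölder inequality to the product on the right with exponents $r$ and $s$ gives
\begin{equation*}
\int_{\X}g^{p'}\,dx\le\left(\int_{\X}(fg)^{p'r}\,dx\right)^{1/r}\left(\int_{\X}f^{-p's}\,dx\right)^{1/s}.
\end{equation*}
The choice $r=\frac{1}{p'}$ makes $p'r=1$, while a short computation using $p=\frac{p'}{p'-1}$ shows $-p's=\frac{-p'}{1-p'}=\frac{p'}{p'-1}=p$. Therefore the estimate collapses to
\begin{equation*}
\int_{\X}g^{p'}\,dx\le\left(\int_{\X}fg\,dx\right)^{p'}\left(\int_{\X}f^{p}\,dx\right)^{1-p'}.
\end{equation*}

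Finally I would solve this for $\int_{\X}fg\,dx$. Dividing by $\left(\int_{\X}f^{p}\,dx\right)^{1-p'}$, which is finite and strictly positive by hypothesis, and then raising to the power $\frac{1}{p'}>0$ (an order-preserving operation) yields
\begin{equation*}
\int_{\X}fg\,dx\ge\left(\int_{\X}g^{p'}\,dx\right)^{1/p'}\left(\int_{\X}f^{p}\,dx\right)^{(p'-1)/p'},
\end{equation*}
and the identity $\frac{p'-1}{p'}=1-\frac{1}{p'}=\frac{1}{p}$ turns this into exactly \eqref{Holin}. The only point requiring genuine care is the exponent bookkeeping — verifying that the Hölder exponent conjugate to $\frac{1}{p'}$ converts $f^{-p'}$ into precisely $f^{p}$ — together with the measure-theoretic observation that the integrability hypotheses force $f>0$ a.e.; once these are in place the argument is a direct rearrangement, so I do not expect a serious obstacle beyond this algebraic care.
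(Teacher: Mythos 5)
The paper itself offers no proof of this statement -- it is quoted directly from \cite{AF} (Theorem 2.12, p.~27) -- so the only meaningful comparison is with the standard argument in that reference, which your proof reproduces essentially verbatim: reduce to the forward H\"older inequality with conjugate exponents $r=\tfrac{1}{p'}$ and $s=\tfrac{1}{1-p'}$ applied to the factorization $g^{p'}=(fg)^{p'}f^{-p'}$. Your exponent bookkeeping is correct ($p'r=1$, $-p's=\tfrac{p'}{p'-1}=p$, $\tfrac{p'-1}{p'}=\tfrac1p$), the degenerate cases ($f>0$ a.e.\ forced by $\int_{\X}f^{p}\,dx<\infty$, and the trivial case $\int_{\X}fg\,dx=+\infty$) are disposed of properly, so the proof is complete and matches the cited source's approach.
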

As the main results of this section, we show the reverse integral Hardy inequality as well as its conjugate.
\begin{thm}\label{integralhar1}
Assume that $p,q<0$ are such that $q\leq p<0$. Let $\mathbb X$ be a metric measure space with a polar decomposition at $a\in \X$. Suppose that $u,v\geq0$ are locally integrable functions on $\mathbb X$. Then the inequality
\begin{equation}\label{eqinteg}
\left[\int_{\mathbb X}\left(\int_{B(a,|x|_{a})}f(y)dy\right)^{q}u(x)dx\right]^{\frac{1}{q}}\geq C_{1}(p,q)\left(\int_{\mathbb X}f^{p}(x)v(x)dx\right)^{\frac{1}{p}}
\end{equation}
holds for all non-negative real-valued measurable functions $f$, if and only if
\begin{equation}\label{D1}
0< D_{1}=\inf_{x\neq a}\mathcal{D}_{1}(|x|_{a})=\inf_{x\neq a}\left[\left(\int_{B(a,|x|_{a})}u(y)dy\right)^{\frac{1}{q}}\left(\int_{B(a,|x|_{a})}v^{1-p'}(y)dy\right)^{\frac{1}{p'}}\right],
\end{equation}
and $\mathcal{D}_{1}(|x|_{a})$ is  non-decreasing.
Moreover, the largest constant $C_{1}(p,q)$ in (2.3) satisfies 
\begin{equation}\label{ocenc1}
    D_{1}\geq C_{1}(p,q)\geq |p|^{\frac{1}{q}}(p')^{\frac{1}{p'}}D_{1},
\end{equation}
where $\frac{1}{p}+\frac{1}{p'}=1.$
\end{thm}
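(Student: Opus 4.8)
The plan is to reduce the inequality to a one–dimensional weighted Stieltjes inequality by means of the polar decomposition \eqref{EQ:polarintro}, and to run the whole argument with the reverse H\"older inequality (Theorem~\ref{Hol}) playing the role that ordinary H\"older plays in the direct case treated in \cite{RV}. Throughout write $r=|x|_a$ and note that $F(r):=\int_{B(a,r)}f\,dy$ depends on $x$ only through $r$. Set
$$U(r)=\int_{B(a,r)}u(y)\,dy, \qquad V(r)=\int_{B(a,r)}v^{1-p'}(y)\,dy,$$
so that $\mathcal D_1(r)=U(r)^{1/q}V(r)^{1/p'}$ and, by \eqref{EQ:polarintro}, $\int_{\mathbb X}F^{q}u\,dx=\int_0^\infty F(r)^{q}\,dU(r)$. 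Since $q,p<0$, the decreasing maps $t\mapsto t^{1/q}$ and $t\mapsto t^{1/p}$ reverse inequalities, so proving \eqref{eqinteg} is equivalent to the upper bound $\int_{\mathbb X}F^{q}u\,dx\le C_1(p,q)^{q}\big(\int_{\mathbb X}f^{p}v\,dx\big)^{q/p}$.

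For the sufficiency I would apply reverse H\"older twice. First, on each sphere $\Sigma_r$ with the measure $\lambda(r,\omega)\,d\omega$: splitting $f=(f^{p}v)^{1/p}(v^{1-p'})^{1/p'}$ shows that the radial density $\phi(r)=\int_{\Sigma_r}f\lambda\,d\omega$ obeys $\int_{\mathbb X}f^{p}v\,dx\ge\int_0^\infty\phi(r)^{p}(V'(r))^{1-p}\,dr=:B_0$; because the exponent $1/p<0$ is negative, replacing the right–hand side of \eqref{eqinteg} by $B_0^{1/p}$ only strengthens the claim, so it suffices to prove that stronger statement. Second, on the interval $(0,r)$ with the weight $(V')^{1/p'}$, reverse H\"older gives
$$F(r)=\int_0^r\phi(s)\,ds\ \ge\ \Gamma(r)^{1/p}V(r)^{1/p'},\qquad \Gamma(r):=\int_0^r\phi^{p}(V')^{1-p}\,ds,$$
where $\Gamma$ is non-decreasing with $\Gamma(\infty)=B_0$. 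Raising to the negative power $q$ and integrating against $dU$ then reduces everything to the one-dimensional estimate
$$\int_0^\infty \Gamma(r)^{q/p}V(r)^{q/p'}\,dU(r)\ \le\ |p|\,(p')^{q/p'}D_1^{\,q}\,\Gamma(\infty)^{q/p},$$
which upon taking the power $1/q$ delivers exactly the lower bound in \eqref{ocenc1}.

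This last estimate is where the two hypotheses on $\mathcal D_1$ enter and is the step I expect to be the main obstacle. Writing $V^{q/p'}=\mathcal D_1^{q}U^{-1}$ converts the integrand into $\Gamma^{q/p}\mathcal D_1^{q}\,d(\log U)$; the crude bound $\mathcal D_1(r)^{q}\le D_1^{q}$ (valid since $\mathcal D_1\ge D_1$ and $q<0$) is by itself too lossy even to converge, so the monotonicity of $\mathcal D_1$ must be exploited through a Stieltjes integration by parts in which $\Gamma^{q/p}$ telescopes to $\Gamma(\infty)^{q/p}$. The explicit constant should fall out of an elementary power integral of the type $\int_0^{\Gamma(\infty)}t^{q/p-1}\,dt=\tfrac{p}{q}\Gamma(\infty)^{q/p}$; the delicate points are the direction of every inequality for negative exponents, the finiteness of the boundary terms, and verifying that monotonicity is precisely what forces the integral to converge.

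For the necessity I would test \eqref{eqinteg} with $f_R=v^{1-p'}\chi_{B(a,R)}$. Using the identity $(1-p')p+1=1-p'$ one computes $\int_{\mathbb X}f_R^{p}v\,dx=V(R)$ and $F(r)=V(\min\{r,R\})$, so the inequality becomes $\int_0^{R}V(r)^{q}\,dU(r)+V(R)^{q}\big(U(\infty)-U(R)\big)\le C_1^{q}V(R)^{q/p}$. Since $q<0$ and $V$ is non-decreasing, $V(r)^{q}\ge V(R)^{q}$ on $(0,R)$, whence $U(R)V(R)^{q}\le\int_0^{R}V^{q}\,dU\le C_1^{q}V(R)^{q/p}$; dividing by $V(R)^{q}$, using $q/p-q=-q/p'$, and raising to the power $1/q$ yields $\mathcal D_1(R)\ge C_1$ for every $R$, hence $D_1\ge C_1>0$, which is both the positivity of $D_1$ and the upper bound in \eqref{ocenc1}. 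The monotonicity of $\mathcal D_1$ is the more delicate half: I would obtain it by comparing the inequality on nested balls $B(a,R_1)\subset B(a,R_2)$ through test functions of the form $v^{1-p'}\big(\chi_{B(a,R_1)}+t\,\chi_{B(a,R_2)\setminus B(a,R_1)}\big)$ and letting the parameter $t$ vary, the point being that a failure of monotonicity would make the resulting one-dimensional inequality violated for a suitable choice of $t$.
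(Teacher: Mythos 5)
Both halves of your plan contain genuine gaps, and the one in the sufficiency part is fatal to the approach as written. The one-dimensional estimate you reduce everything to is \emph{false}. Test it on the model case that Section 3 of the paper is built on: power weights on a homogeneous group, where $U(r)=r^{a}$, $V(r)=r^{b}$ with $a,b>0$ and $\frac{a}{q}+\frac{b}{p'}=0$, so that $\mathcal D_1\equiv D_1$ is a positive constant (hence both hypotheses of the theorem hold). Then $V(r)^{q/p'}\,dU(r)=D_1^{q}\,dU(r)/U(r)$, and for any admissible $f\not\equiv0$ your $\Gamma$ is non-decreasing with $\Gamma(\infty)=B_0>0$; choosing $R$ with $\Gamma(R)\ge B_0/2$ and using $q/p\ge 1>0$ gives
\begin{equation*}
\int_0^\infty\Gamma(r)^{q/p}V(r)^{q/p'}\,dU(r)\ \ge\ D_1^{q}\left(\frac{B_0}{2}\right)^{q/p}\int_R^\infty\frac{dU(r)}{U(r)}\ =\ +\infty ,
\end{equation*}
since $U(\infty)=\infty$ here. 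So the left-hand side of your target inequality is infinite while its right-hand side is finite; no Stieltjes integration by parts can repair this, because the quantity itself diverges. The loss occurs in your second reverse-H\"older application: the splitting $\phi=[\phi(V')^{-1/p'}]\,[(V')^{1/p'}]$ gives $F(r)\ge\Gamma(r)^{1/p}V(r)^{1/p'}$, which for large $r$ degenerates into the global reverse H\"older bound and, after raising to the power $q$ and integrating $dU$, is exactly log-divergent. The paper's proof avoids precisely this by inserting the extra weight $h(r)=\bigl(\int_0^r V'(s)\,ds\bigr)^{1/(pp')}$ into the H\"older splitting, $f=[f\,v^{1/p}h]\,[v^{1/p}h]^{-1}$, which (after the computation $H_1=p'h^{p}$) yields $F(r)\ge(p')^{1/p'}\bigl(\int_0^r h^{p}F_0\,ds\bigr)^{1/p}V(r)^{1/(p')^{2}}$; upon raising to the power $q$, the factor $V^{q/(p')^{2}}$ decays strictly faster than your $V^{q/p'}$ (in the model case $V^{q/(p')^{2}}dU\sim r^{a/p-1}dr$, which is integrable at infinity, versus your $r^{-1}dr$). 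The second missing ingredient is that the paper then applies Minkowski's integral inequality with exponent $q/p\ge1$ to interchange the $dU$- and $ds$-integrations \emph{before} invoking the monotonicity of $\mathcal D_1$ and two explicit power-function integrals; this ordering is what makes the inner integral $\int_s^\infty U_1\,V^{q/(p')^{2}}\,dz$ converge and produces the constant $|p|^{1/q}(p')^{1/p'}$.

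The necessity half also has a real, though fixable, error: since $p<0$ one has $0^{p}=+\infty$, so for your test function $f_R=v^{1-p'}\chi_{B(a,R)}$ the right-hand side integral is $\int_{\X}f_R^{p}v\,dx=V(R)+\int_{\X\setminus B(a,R)}0^{p}\,v\,dx=+\infty$, not $V(R)$. Consequently $\bigl(\int_{\X}f_R^{p}v\,dx\bigr)^{1/p}=0$ and \eqref{eqinteg} holds trivially for $f_R$, giving no information on $C_1$. This is exactly why the paper's Step 2 takes the two-piece test function equal to $v^{1-p'}$ on $B(a,t)$ and to $\alpha f_1>0$ outside, with $f_1$ auxiliary and with finite integrals, and then passes to the limit in which $\alpha^{p}\int_{|y|_a>t}v f_1^{p}\,dy\to0$ (note that since $p<0$ this means $\alpha\to\infty$; the paper's ``$\alpha\to0$'' is a slip of precisely the kind that sank your test function). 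Once the test function is corrected, your subsequent chain --- dropping the region $|x|_a>R$, using $V(r)^{q}\ge V(R)^{q}$ on $(0,R)$, and the identity $\frac{q}{p}-q=-\frac{q}{p'}$ --- is exactly the paper's and gives $C_1\le\mathcal D_1(t)$ for all $t$, hence $C_1\le D_1$. Your further idea of proving the \emph{necessity} of the monotonicity of $\mathcal D_1$ by nested-ball test functions is only a sketch; for what it is worth, the paper's Step 2 does not prove that direction either, so this is the least of the problems.
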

\begin{rem}
In (2.5), by simple calculation, we have that the for the case $q\leq p<0$
\begin{equation}
    |p|^{\frac{1}{q}}(p')^{\frac{1}{p'}}\leq 1.
\end{equation}
\end{rem}
\begin{proof}[Proof of Theorem 2.2]
Let us divide a proof of this theorem to 2 steps.

\textbf{Step 1.}  Firstly, let us denote
\begin{align}
&F(s):=\int_{\sum_{s}}\lambda(s,\sigma)f^{p}(s,\sigma)v(s,\sigma)d\sigma,\label{fn}\\&
V(s):=\int_{\sum_{s}}\lambda(s,\sigma)v^{1-p'}(s,\sigma)d\sigma,\label{vn}\\&
h(t):=\left(\int_{0}^{t}\int_{\sum_{s}}\lambda(s,\sigma)v^{1-p'}(s,\sigma)d\sigma ds\right)^{\frac{1}{pp'}},\label{h}\\&
H_{1}(t):=\int_{0}^{t}\int_{\sum_{s}}\lambda(s,\sigma)v^{-\frac{p'}{p}}(s,\sigma)h^{-p'}(s)d\sigma ds,\label{h1}\\&
U_{1}(s):=\int_{\sum_{s}}\lambda(s,\sigma)u(s,\sigma)d\sigma\label{u1}.
\end{align}

 By using the reverse H\"{o}lder inequality (2.2) with the polar decomposition, we compute
\begin{equation}\label{1mom}
\begin{split}
\int_{B(a,|x|_{a})}f(y)dy&=\int_{B(a,|x|_{a})}[f(y)v^{\frac{1}{p}}(y)h(y)][v^{\frac{1}{p}}(y)h(y)]^{-1}dy\\&
\geq \left(\int_{B(a,|x|_{a})}(f(y)v^{\frac{1}{p}}(y)h(y))^{p}dy\right)^{\frac{1}{p}} \left(\int_{B(a,|x|_{a})}(v^{\frac{1}{p}}(y)h(y))^{-p'}dy\right)^{\frac{1}{p'}}\\&
=\left(\int_{0}^{|x|_{a}}\int_{\sum_{s}}h^{p}(s)\lambda(s,\sigma)f^{p}(s,\sigma)v(s,\sigma)d\sigma ds\right)^{\frac{1}{p}}\\&
\times
\left(\int_{0}^{|x|_{a}}\int_{\sum_{s}}v^{-\frac{p'}{p}}(s,\sigma)h^{-p'}(s)\lambda(s,\sigma)d\sigma ds\right)^{\frac{1}{p'}}\\&
=\left(\int_{0}^{|x|_{a}}h^{p}(s)F(s) ds\right)^{\frac{1}{p}}H_{1}^{\frac{1}{p'}}(|x|_{a}).
\end{split}
\end{equation}

Let us calculate  $H_{1}(t)$:
\begin{equation}\label{2mom}
    \begin{split}
    &H_{1}(t)= \int_{0}^{t}\int_{\sum_{s}}\lambda(s,\sigma)v^{-\frac{p'}{p}}(s,\sigma)h^{-p'}(s)d\sigma ds\\&
    \stackrel{(2.8)}=\int_{0}^{t}h^{-p'}(s)V(s)ds\\&
    \stackrel{(2.9)}=\int_{0}^{t}\left(\int_{0}^{s}\int_{\sum_{z}}\lambda(z,\omega)v^{1-p'}(z,\omega)dzd\omega\right)^{-\frac{1}{p}}V(s)ds\\&
  \stackrel{(2.8)}=\int_{0}^{t}\left(\int_{0}^{s}V(z)dz\right)^{-\frac{1}{p}}V(s)ds\\&
  =\int_{0}^{t}\left(\int_{0}^{s}V(z)dz\right)^{-\frac{1}{p}}d_{s}\left(\int_{0}^{s}V(z)dz\right)\\&
 =p'\left(\int_{0}^{s}V(z)dz\right)^{\frac{1}{p'}}\big{|}_{0}^{t}\\&
    \stackrel{\frac{1}{p'}>0}=p'\left(\int_{0}^{t}V(z)dz\right)^{\frac{1}{p'}}\\&
    =p'h^{p}(t).
    \end{split}
\end{equation}
By combining  (2.13) and (2.12), we get
\begin{equation}\label{3mom}
\begin{split}
    \int_{B(a,|x|_{a})}f(y)dy&\geq\left(\int_{0}^{|x|_{a}}h^{p}(s)F(s) ds\right)^{\frac{1}{p}}H_{1}^{\frac{1}{p'}}(|x|_{a})\\&
   \stackrel{(2.13)}=(p')^{\frac{1}{p'}}\left(\int_{0}^{|x|_{a}}h^{p}(s)F(s) ds\right)^{\frac{1}{p}}h^{\frac{p}{p'}}(|x|_{a}).   
\end{split}
\end{equation}
Multiplying by $u$, integrating over $\X$ with $q<0$ and by using (direct) Minkowski's inequality with $\frac{q}{p}\geq1$ (see \cite{AF}, Theorem 2.9, p.26), we compute
 \begin{equation}\label{2.12}
     \begin{split}
         &\int_{\X} \left(\int_{B(a,|x|_{a})}f(y)dy\right)^{q}u(x)dx\\&
         =\int_{0}^{\infty}\int_{\sum_{r}}u(z,\omega)\lambda(z,\omega)\left(\int_{0}^{|x|_{a}}\int_{\sum_{s}}\lambda(s,\sigma)f(s,\sigma)dsd\sigma\right)^{q}dzd\omega\\&
         \stackrel{(2.11)}=\int_{0}^{\infty}U_{1}(z)\left(\int_{0}^{z}\int_{\sum_{s}}\lambda(s,\sigma)f(s,\sigma)dsd\sigma\right)^{q}dz\\&
         \stackrel{q<0,\,(2.14)}\leq(p')^{\frac{q}{p'}}\int_{0}^{\infty}U_{1}(z)\left(\int_{0}^{z}h^{p}(s)F(s) ds\right)^{\frac{q}{p}}h^{\frac{qp}{p'}}(z)dz\\&
         =(p')^{\frac{q}{p'}}\int_{0}^{\infty}U_{1}(z)\left(\int_{0}^{\infty}\chi_{[0,z]}h^{p}(s)F(s) ds\right)^{\frac{q}{p}}h^{\frac{qp}{p'}}(z)dz\\&
         \leq (p')^{\frac{q}{p'}}\left[\int_{0}^{\infty}h^{p}(s)F(s) \left(\int_{s}^{\infty}U_{1}(z)h^{\frac{qp}{p'}}(z)dz\right)^{\frac{p}{q}}ds\right]^{\frac{q}{p}},
     \end{split}
 \end{equation}
 where $\chi_{[0,r]}$ is the cut-off function.
 At the same time, one can also estimate
 \begin{equation}\label{hpq}
     \begin{split}
         h^{\frac{pq}{p'}}(t)&=\left[\left(\int_{0}^{t}\int_{\sum_{s}}\lambda(s,\sigma)v^{1-p'}(s,\sigma)dsd\sigma\right)^{\frac{q}{p'}}\right]^{\frac{1}{p'}}\\&
         \stackrel{(2.8)}=\left[\left(\int_{0}^{t}V(s)ds\right)^{\frac{q}{p'}}\right]^{\frac{1}{p'}}\\&
         =\left[\left(\int_{0}^{t}V(s)ds\right)^{\frac{q}{p'}}\left(\int_{0}^{t}U_{1}(s)ds\right)\left(\int_{0}^{t}U_{1}(s)ds\right)^{-1}\right]^{\frac{1}{p'}}\\&
         =\mathcal{D}^{\frac{q}{p'}}_{1}(|t|_{a})\left(\int_{0}^{t}U_{1}(s)ds\right)^{-\frac{1}{p'}},
     \end{split}
 \end{equation}
 where $\mathcal{D}_{1}(|t|_{a}):=\left(\int_{0}^{t}V(s)ds\right)^{\frac{1}{p'}}\left(\int_{0}^{t}U_{1}(s)ds\right)^{\frac{1}{q}}$.
 By using this fact and since  $\mathcal{D}_{1}(|x|_{a})$ is non-decreasing, we get
 \begin{equation*}
     \begin{split}
         &\int_{\X} \left(\int_{B(a,|x|_{a})}f(y)dy\right)^{q}u(x)dx\\&
         \stackrel{(2.15)}\leq (p')^{\frac{q}{p'}}\left[\int_{0}^{\infty}h^{p}(s)F(s) \left(\int_{s}^{\infty}U_{1}(r)h^{\frac{qp}{p'}}(r)dr\right)^{\frac{p}{q}}ds\right]^{\frac{q}{p}}\
            \end{split}
 \end{equation*}
  \begin{equation}
     \begin{split}
 &\stackrel{\frac{p}{q}>0,\,(2.16)}\leq (p')^{\frac{q}{p'}}\left[\int_{0}^{\infty}h^{p}(s)F(s)\mathcal{D}^{\frac{p}{p'}}_{1}(s) \left(\int_{s}^{\infty}U_{1}(r)\left(\int_{0}^{r}U_{1}(z)dz\right)^{-\frac{1}{p'}}dr\right)^{\frac{p}{q}}ds\right]^{\frac{q}{p}}\\&
 =(p')^{\frac{q}{p'}}\left[\int_{0}^{\infty}h^{p}(s)F(s)\mathcal{D}^{\frac{p}{p'}}_{1}(s) \left(\int_{s}^{\infty}d_{r}\left[p\left(\int_{0}^{r}U_{1}(z)dz\right)^{\frac{1}{p}}\right]\right)^{\frac{p}{q}}ds\right]^{\frac{q}{p}}\\&
         =(p')^{\frac{q}{p'}}\left[\int_{0}^{\infty}h^{p}(s)F(s)\mathcal{D}^{\frac{p}{p'}}_{1}(s) \left(p\left(\int_{0}^{\infty}U_{1}(z)dz\right)^{\frac{1}{p}}-p\left(\int_{0}^{s}U_{1}(z)dz\right)^{\frac{1}{p}}\right)^{\frac{p}{q}}ds\right]^{\frac{q}{p}}\\&
       \stackrel{p<0}\leq (-p)(p')^{\frac{q}{p'}}\left[\int_{0}^{\infty}h^{p}(s)F(s)\mathcal{D}^{\frac{p}{p'}}_{1}(s) \left(\int_{0}^{s}U_{1}(z)dz\right)^{\frac{1}{q}}ds\right]^{\frac{q}{p}}\\&
        \stackrel{(2.9)}=(-p)(p')^{\frac{q}{p'}}\left[\int_{0}^{\infty}F(s)\mathcal{D}^{1+\frac{p}{p'}}_{1}(s) ds\right]^{\frac{q}{p}}\\&
  =(-p)(p')^{\frac{q}{p'}}\left[\int_{0}^{\infty}F(s)\mathcal{D}^{p}_{1}(s) ds\right]^{\frac{q}{p}}\\&
         \stackrel{p<0}\leq (-p)(p')^{\frac{q}{p'}}D^{q}_{1}\left[\int_{0}^{\infty}F(s) ds\right]^{\frac{q}{p}}\\&
         \stackrel{(2.7)}=(-p)(p')^{\frac{q}{p'}}D^{q}_{1}\left(\int_{\X}f^{p}(x)v(x)dx\right)^{\frac{q}{p}}\\&
         =|p|(p')^{\frac{q}{p'}}D^{q}_{1}\left(\int_{\X}f^{p}(x)v(x)dx\right)^{\frac{q}{p}}.
     \end{split}
 \end{equation}
 Finally, we obtain
 \begin{equation}
 \left(\int_{\X} \left(\int_{B(a,|x|_{a})}f(y)dy\right)^{q}u(x)dx\right)^{\frac{1}{q}}\geq |p|^{\frac{1}{q}}(p')^{\frac{1}{p'}}D_{1}\left(\int_{\X}f^{p}(x)v(x)dx\right)^{\frac{1}{p}}.
  \end{equation}
  Hence, it follows that (2.3) holds with $C_{1}(p,q)\geq|p|^{\frac{1}{q}}(p')^{\frac{1}{p'}}D_{1}$, proving one of the relations in (2.5).
  
\textbf{Step 2.}
Now it remains to show that (2.3) yields (2.4). Let us fix $t>0$ and denote  the following function:
\begin{equation}
f(x):=
\begin{cases}
v^{1-p'}(x),\,\,\text{if}\,\,\,|x|_{a}\leq t,\\
\alpha f_{1}(x), \,\,\text{if}\,\,\,|x|_{a}> t,
\end{cases}
\end{equation}
 where
$f_{1}$ is any function satisfying $\int_{B(a,|x|_{a})}f_{1}(y)dy<\infty$ and $\int_{|x|_{a}\geq t}v(x)f^{p}_{1}(x)dx<\infty,$ and $\alpha>0$. Then we compute
\begin{equation*}
\begin{split}
C_{1}(p,q)&\leq \left[\int_{\mathbb X}\left(\int_{|y|_{a}\leq |x|_{a}}f(y)dy\right)^{q}u(x)dx\right]^{\frac{1}{q}} \left[\int_{\mathbb X}f^{p}(y)v(y)dy\right]^{-\frac{1}{p}}
\end{split}
\end{equation*}
\begin{equation*}
\begin{split}
&=\left[\int_{\mathbb X}\left(\int_{|y|_{a}\leq |x|_{a}}f(y)dy\right)^{q}u(x)dx\right]^{\frac{1}{q}}\left[\int_{|y|_{a}\leq t}v^{1-p'}(y)dy+\alpha^{p}\int_{|y|_{a}> t}v(y)f_{1}^{p}(y)dy\right]^{-\frac{1}{p}}\\&
\stackrel{q<0}\leq \left[\int_{|x|_{a}\leq t}\left(\int_{|y|_{a}\leq |x|_{a}}f(y)dy\right)^{q}u(x)dx\right]^{\frac{1}{q}}\left[\int_{|y|_{a}\leq t}v^{1-p'}(y)dy+\alpha^{p}\int_{|y|_{a}> t}v(y)f_{1}^{p}(y)dy\right]^{-\frac{1}{p}}\\&
=\left[\int_{|x|_{a}\leq t}\left(\int_{|y|_{a}\leq |x|_{a}}v^{1-p'}(y)dy\right)^{q}u(x)dx\right]^{\frac{1}{q}}\left[\int_{|y|_{a}\leq t}v^{1-p'}(y)dy+\alpha^{p}\int_{|y|_{a}> t}v(y)f_{1}^{p}(y)dy\right]^{-\frac{1}{p}}\\&
\stackrel{q<0}\leq\left[\int_{|x|_{a}\leq t}\left(\int_{|y|_{a}\leq t}v^{1-p'}(y)dy\right)^{q}u(x)dx\right]^{\frac{1}{q}}\left[\int_{|y|_{a}\leq t}v^{1-p'}(y)dy+\alpha^{p}\int_{|y|_{a}> t}v(y)f_{1}^{p}(y)dy\right]^{-\frac{1}{p}}\\&
=\left[\int_{|x|_{a}\leq t}u(x)dx\right]^{\frac{1}{q}}\left[\int_{|y|_{a}\leq t}v^{1-p'}(y)dy\right]\left[\int_{|y|_{a}\leq t}v^{1-p'}(y)dy+\alpha^{p}\int_{|y|_{a}> t}v(y)f_{1}^{p}(y)dy\right]^{-\frac{1}{p}}.
\end{split}
\end{equation*}
Summarising above facts with $q\leq p<0$ and taking limit as $\alpha\rightarrow 0$, we obtain
\begin{equation}
    C_{1}(p,q)\leq\left[\int_{|y|_{a}\leq t}v^{1-p'}(y)dy\right]^{\frac{1}{p'}}\left[\int_{|x|_{a}\leq t}u(x)dx\right]^{\frac{1}{q}}.
\end{equation}
Finally, we get $C_{1}(p,q)\leq D_{1}.$
\end{proof}
Now let us prove the conjugate integral Hardy inequality.
\begin{thm}\label{integralhar2}
Assume that $p,q<0$ such that $q\leq p<0$. Let $\mathbb X$ be a metric measure space with a polar decomposition at $a\in \X$. Suppose that $u,v\geq0$ are locally integrable functions on $\mathbb X$. Then the inequality
\begin{equation}\label{eqinteg2}
\left[\int_{\mathbb X}\left(\int_{\X\setminus B(a,|x|_{a})}f(y)dy\right)^{q}u(x)dx\right]^{\frac{1}{q}}\geq C_{2}(p,q)\left(\int_{\mathbb X}f^{p}(x)v(x)dx\right)^{\frac{1}{p}}
\end{equation}
holds for all non-negative real-valued measurable functions $f$, if and only if
\begin{equation}\label{D2}
0< D_{2}=\inf_{x\neq a}\mathcal{D}_{2}(|x|_{a})=\inf_{x\neq a}\left[\left(\int_{\X\setminus B(a,|x|_{a})}u(y)dy\right)^{\frac{1}{q}}\left(\int_{\X\setminus B(a,|x|_{a})}v^{1-p'}(y)dy\right)^{\frac{1}{p'}}\right],
\end{equation}
and $\mathcal{D}_{2}(|x|_{a})$ is  non-increasing.
Moreover, the largest constant $C_{2}(p,q)$ satisfies 
\begin{equation}
    D_{2}\geq C_{2}(p,q)\geq |p|^{\frac{1}{q}}(p')^{\frac{1}{p'}}D_{2},
\end{equation}
where $\frac{1}{p}+\frac{1}{p'}=1.$
\end{thm}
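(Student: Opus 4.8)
The plan is to mirror, essentially line by line, the two-step argument used for Theorem~\ref{integralhar1}, systematically replacing the ball $B(a,|x|_a)$ (equivalently the inner interval $\int_0^t$) by its complement $\X\setminus B(a,|x|_a)$ (the outer interval $\int_t^\infty$), and exploiting that $\mathcal{D}_2$ is \emph{non-increasing} rather than non-decreasing. All the auxiliary machinery of \eqref{fn}--\eqref{u1} carries over after a single change of endpoint in the weight.

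\textbf{Step 1 (sufficiency).} I keep $F$, $V$, $U_1$ as in \eqref{fn}, \eqref{vn}, \eqref{u1}, but redefine $h(t):=\left(\int_t^\infty V(s)\,ds\right)^{\frac{1}{pp'}}$ and set $H_2(t):=\int_t^\infty h^{-p'}(s)V(s)\,ds$. The exterior analogue of \eqref{2mom}, obtained by writing $V(s)\,ds=-d_s\!\left(\int_s^\infty V\right)$ and integrating, gives $H_2(t)=p'\,h^p(t)$; the only new point is that the boundary contribution at $s=\infty$ drops because $\left(\int_s^\infty V\right)^{1/p'}\to 0$ (here $1/p'>0$), which is implicitly guaranteed by the finiteness encoded in $D_2>0$. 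Applying the reverse H\"older inequality (Theorem~\ref{Hol}) on the exterior region exactly as in \eqref{1mom}--\eqref{3mom} yields
\[
\int_{\X\setminus B(a,|x|_a)}f(y)\,dy\ \geq\ (p')^{\frac{1}{p'}}\left(\int_{|x|_a}^{\infty}h^p(s)F(s)\,ds\right)^{\frac{1}{p}}h^{\frac{p}{p'}}(|x|_a).
\]
Multiplying by $u$, integrating in polar coordinates and raising to the power $q<0$ (which reverses the inequality) reduces matters to $(p')^{\frac{q}{p'}}\int_0^\infty U_1(z)\left(\int_z^\infty h^pF\,ds\right)^{\frac{q}{p}}h^{\frac{qp}{p'}}(z)\,dz$. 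Now I apply the direct Minkowski inequality with exponent $q/p\geq1$ as in \eqref{2.12}; the structural difference from Theorem~\ref{integralhar1} is that $\chi_{[z,\infty)}(s)=\chi_{[0,s]}(z)$, so after transposing the order of integration the inner weight integral runs over $z\in[0,s]$, i.e.\ it becomes $\int_0^s U_1(z)h^{\frac{qp}{p'}}(z)\,dz$. Substituting the exterior analogue of \eqref{hpq}, $h^{\frac{qp}{p'}}(r)=\mathcal{D}_2^{\frac{q}{p'}}(r)\left(\int_r^\infty U_1\right)^{-\frac{1}{p'}}$, and using that $\mathcal{D}_2$ is non-increasing (so $\mathcal{D}_2(r)\geq\mathcal{D}_2(s)$ for $r\leq s$, hence $\mathcal{D}_2^{q/p'}(r)\leq\mathcal{D}_2^{q/p'}(s)$ since $q/p'<0$), I pull the factor $\mathcal{D}_2^{p/p'}(s)$ outside. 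The remaining kernel integrates exactly to $\int_0^s U_1(r)\left(\int_r^\infty U_1\right)^{-1/p'}dr=p\left[\left(\int_0^\infty U_1\right)^{1/p}-\left(\int_s^\infty U_1\right)^{1/p}\right]\leq(-p)\left(\int_s^\infty U_1\right)^{1/p}$, the last step discarding the negative $r=0$ boundary term. Reassembling, $h^p(s)\mathcal{D}_2^{p/p'}(s)\left(\int_s^\infty U_1\right)^{1/q}=\mathcal{D}_2^{1+p/p'}(s)=\mathcal{D}_2^{p}(s)$, and $\mathcal{D}_2\geq D_2$ with $p<0$ gives $\mathcal{D}_2^p\leq D_2^p$; raising to $q/p>0$ then to $1/q<0$ yields $C_2(p,q)\geq|p|^{1/q}(p')^{1/p'}D_2$.

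\textbf{Step 2 (necessity).} To prove $C_2\leq D_2$, fix $t>0$ and test \eqref{eqinteg2} against $f=v^{1-p'}$ on $\X\setminus B(a,t)$ and $f=\alpha f_1$ on $B(a,t)$, where $f_1$ is chosen so that all integrals converge and $\alpha>0$. Restricting the outer integration to $\{|x|_a\geq t\}$ (admissible since $q<0$), on that set $\int_{\X\setminus B(a,|x|_a)}f=\int_{|y|_a\geq|x|_a}v^{1-p'}$, which I replace by the larger quantity $\int_{|y|_a\geq t}v^{1-p'}$ (again legitimate because $q<0$); being independent of $x$, it factors out. Letting $\alpha$ tend to the value that annihilates the interior contribution $\alpha^p\int_{B(a,t)}f_1^pv$ to $\int_\X f^pv$ leaves $C_2\leq\left(\int_{|y|_a\geq t}v^{1-p'}\right)^{1/p'}\left(\int_{|x|_a\geq t}u\right)^{1/q}=\mathcal{D}_2(t)$, and taking the infimum over $t>0$ gives $C_2\leq D_2$.

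\textbf{Main obstacle.} The only genuine difficulty is the sign bookkeeping: with $p,q<0$ each use of reverse H\"older, of Minkowski (in its direct form, since $q/p\geq1$), and of the monotonicity of $\mathcal{D}_2$ either preserves or reverses the inequality, and one must verify at every stage that the direction is the one needed. The single new verification relative to Theorem~\ref{integralhar1} is that, under the exterior transposition, the Minkowski step produces the complementary interval $[0,s]$ and that the \emph{non-increasing} monotonicity of $\mathcal{D}_2$ is precisely what legitimizes the pull-out there, this being the exact mirror image of the non-decreasing case.
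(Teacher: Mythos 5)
Your proposal is correct and takes essentially the same approach as the paper: the paper's own proof of this theorem consists solely of the remark that one repeats the argument of Theorem~2.2 with $B(a,|x|_a)$ replaced by its complement and ``non-decreasing'' replaced by ``non-increasing,'' which is precisely the mirroring you carry out, including the two points that genuinely change (the vanishing boundary term at infinity in the identity $H_2(t)=p'h^p(t)$, and the transposed Minkowski step producing $\int_0^s U_1(z)h^{\frac{qp}{p'}}(z)\,dz$, where the non-increasing monotonicity of $\mathcal{D}_2$ is what allows the pull-out). Your careful phrasing of the limit in $\alpha$ in Step~2 also quietly corrects the paper's slip in the proof of Theorem~2.2, where ``$\alpha\to 0$'' is written although $\alpha^p\to 0$ (i.e.\ $\alpha\to\infty$, since $p<0$) is what is needed.
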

\begin{proof}
The main idea of the proof of this theorem is similar to that of  Theorem 2.2 with the only difference that $\mathcal{D}_{2}(|x|_{a})$ is  non-increasing, so we omit the details.
\end{proof}
\section{Consequences on homogeneous groups}
In this section, we consider several consequences of the main results for the reverse integral Hardy, Hardy-Littlewood-Sobolev and Stein-Weiss  inequalities on homogeneous groups.

 Let us recall that a Lie group (on $\mathbb{R}^{n}$) $\mathbb{G}$ with the dilation
$$D_{\lambda}(x):=(\lambda^{\nu_{1}}x_{1},\ldots,\lambda^{\nu_{n}}x_{n}),\; \nu_{1},\ldots, \nu_{n}>0,\; D_{\lambda}:\mathbb{R}^{n}\rightarrow\mathbb{R}^{n},$$
which is an automorphism of the group $\mathbb{G}$ for each $\lambda>0,$
is called a {\em homogeneous (Lie) group}. For simplicity, throughout this paper we use the notation $\lambda x$ for the dilation $D_{\lambda}(x).$  The homogeneous dimension of the homogeneous group $\mathbb{G}$ is denoted by $Q:=\nu_{1}+\ldots+\nu_{n}.$
Also, in this paper we denote a homogeneous quasi-norm on $\mathbb{G}$ by $|x|$, which
is a continuous non-negative function
\begin{equation}
\mathbb{G}\ni x\mapsto |x|\in[0,\infty),
\end{equation}
with the following properties

\begin{itemize}
	\item[i)] $|x|=|x^{-1}|$ for all $x\in\mathbb{G}$,
	\item[ii)] $|\lambda x|=\lambda |x|$ for all $x\in \mathbb{G}$ and $\lambda>0$,
	\item[iii)] $|x|=0$ if and only if $x=0$.
\end{itemize}
Let us also recall the following well-known fact about quasi-norms.
\begin{prop}[e.g. \cite{FR}, Proposition 3.1.38 and \cite{RS18}, Proposition 1.2.4] \label{prop_quasi_norm}
If $|\cdot|$ is a homogeneous quasi-norm on  $\mathbb{G}$, there exists $C>0$ such that for every $x,y\in \mathbb{G}$, we have
\begin{equation}\label{tri}
|x y|\leq C(|x| + |y|).
\end{equation}
\end{prop}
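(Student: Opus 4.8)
The plan is to exploit three structural facts about homogeneous groups: that the dilations $D_\lambda$ are group automorphisms that scale the quasi-norm linearly, that group multiplication and the quasi-norm are continuous, and that the quasi-unit-sphere is compact. Together these reduce the claimed inequality to maximizing a continuous function over a compact set.

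First I would record the scaling identity. Since each $D_\lambda$ is an automorphism, $D_\lambda(xy) = D_\lambda(x)\,D_\lambda(y)$, and property (ii) gives $|D_\lambda z| = \lambda|z|$; combining these yields $|(\lambda x)(\lambda y)| = \lambda|xy|$ for every $\lambda>0$, while also $|\lambda x|+|\lambda y| = \lambda(|x|+|y|)$. Thus both sides of the asserted inequality are homogeneous of degree one under the simultaneous dilation $(x,y)\mapsto(\lambda x,\lambda y)$, so it suffices to establish it on the level set $K := \{(x,y)\in\mathbb{G}\times\mathbb{G} : |x|+|y| = 1\}$.

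Next I would establish compactness. Writing $\mathfrak{S}$ for the Euclidean unit sphere in $\mathbb{R}^n$, the anisotropic polar map $(0,\infty)\times\mathfrak{S}\ni(r,\omega)\mapsto D_r(\omega)\in\mathbb{G}\setminus\{0\}$ is a homeomorphism (the standard dilation polar decomposition). By continuity and strict positivity of $|\cdot|$ on the compact set $\mathfrak{S}$, there are constants $0<m\le M<\infty$ with $m\le|\omega|\le M$ for all $\omega\in\mathfrak{S}$. For $x = D_r(\omega)$ one has $|x| = r|\omega|$, so the quasi-ball $\{|x|\le 1\}$ corresponds to $r\le 1/m$ and is hence Euclidean-bounded; being also closed, it is compact. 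Consequently $K$, a closed subset of the product of two such quasi-balls, is compact, and since $(x,y)\mapsto|xy|$ is continuous it attains a finite maximum $C$ on $K$.

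Finally I would run the scaling argument. For $(x,y)$ with $|x|+|y|>0$, set $\lambda = (|x|+|y|)^{-1}$; then $(\lambda x,\lambda y)\in K$, so $|(\lambda x)(\lambda y)|\le C$, and by the scaling identity this is exactly $\lambda|xy|\le C$, i.e. $|xy|\le C(|x|+|y|)$. The case $x=y=0$ is trivial since then $xy = 0$. I expect the compactness step to be the main obstacle: one must verify that the quasi-sphere is genuinely Euclidean-compact, which rests on the homeomorphism property of the anisotropic polar map together with comparing $|\cdot|$ against the Euclidean norm on $\mathfrak{S}$. Once this is secured, the continuity and scaling parts are routine.
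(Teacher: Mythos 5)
Your proof is correct. The paper itself does not prove this proposition --- it quotes it from the cited references (\cite{FR}, Proposition 3.1.38 and \cite{RS18}, Proposition 1.2.4) --- and your argument (reduction by dilation-homogeneity to the set $\{|x|+|y|=1\}$, compactness of quasi-balls via the anisotropic polar decomposition, and continuity of $(x,y)\mapsto|xy|$) is essentially the standard proof given in those references.
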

The following polarisation formula on homogeneous Lie groups will be used in our proofs:
there is a (unique)
positive Borel measure $\sigma$ on the
unit quasi-sphere
$
\S:=\{x\in \mathbb{G}:\,|x|=1\},
$
so that for every $f\in L^{1}(\mathbb{G})$ we have
\begin{equation}\label{EQ:polar}
\int_{\mathbb{G}}f(x)dx=\int_{0}^{\infty}
\int_{\S}f(ry)r^{Q-1}d\sigma dr.
\end{equation}
We refer to \cite{FS1} for the original appearance of such groups, to \cite{FR} and to \cite{RS18} for a recent comprehensive treatment.
Let us define quasi-ball centered at $x$ with radius $r$ in the following form:
\begin{equation}
B(x,r):=\{y\in\mathbb{G}:|x^{-1}y|<r\}.
\end{equation}

\subsection{Reverse integral Hardy inequality}
In this sub-section we show the reverse integral Hardy inequality on homogeneous Lie groups.
\begin{thm}
Let $\mathbb{G}$ be a homogeneous Lie group of homogeneous dimension $Q$ with a quasi-norm $|\cdot|$. Assume that $q\leq p<0$ and $\alpha,\beta\in\mathbb{R}$. Then the reverse
integral Hardy inequality
\begin{equation}\label{inhar1}
\left[\int_{\mathbb{G}}\left(\int_{B(0,|x|)}f(y)dy\right)^{q}|x|^{\alpha}dx\right]^{\frac{1}{q}}\geq C_{1}\left(\int_{\mathbb{G}}f^{p}(x)|x|^{\beta}dx\right)^{\frac{1}{p}},
\end{equation}
holds for some $C_{1}$ > 0 and for all non-negative measurable functions $f$, if  $\alpha+Q>0$, $\beta(1-p')+Q>0$ and $\frac{Q+\alpha}{q}+\frac{Q+\beta(1-p')}{p'}=0,$ where $\frac{1}{p}+\frac{1}{p'}=1$.
Moreover, the biggest constant $C_{1}$ for (3.4) satisfies
$$\left(\frac{|\S|}{\alpha+Q}\right)^{\frac{1}{q}}\left(\frac{|\S|}{Q+\beta(1-p')}\right)^{\frac{1}{p'}}\geq C_{1}\geq|p|^{\frac{1}{q}}(p')^{\frac{1}{p'}}\left(\frac{|\S|}{\alpha+Q}\right)^{\frac{1}{q}}\left(\frac{|\S|}{Q+\beta(1-p')}\right)^{\frac{1}{p'}}. $$
\end{thm}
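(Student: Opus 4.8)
The plan is to obtain this statement as a direct corollary of Theorem~\ref{integralhar1}. On a homogeneous Lie group the polar decomposition \eqref{EQ:polar} is exactly of the form required there, with base point $a=0$, with $|x|_{a}=|x|$ the quasi-norm, and with angular-independent density $\lambda(r,\sigma)=r^{Q-1}$. I would therefore invoke Theorem~\ref{integralhar1} with the weight choices $u(x)=|x|^{\alpha}$ and $v(x)=|x|^{\beta}$, so that the entire task reduces to evaluating the quantity $D_{1}$ from \eqref{D1} explicitly and to checking that the profile $\mathcal{D}_{1}$ is non-decreasing.

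First I would compute the two ball-integrals entering $\mathcal{D}_{1}(R)$, with $R=|x|$, by means of \eqref{EQ:polar}. For any exponent $\gamma$ one has $\int_{B(0,R)}|y|^{\gamma}\,dy=|\S|\int_{0}^{R}r^{\gamma+Q-1}\,dr=|\S|\,R^{\gamma+Q}/(\gamma+Q)$, which is finite and strictly positive precisely when $\gamma+Q>0$. Taking $\gamma=\alpha$ produces the first factor and forces $\alpha+Q>0$, while taking $\gamma=\beta(1-p')$ gives $\int_{B(0,R)}v^{1-p'}(y)\,dy=|\S|\,R^{\beta(1-p')+Q}/(\beta(1-p')+Q)$ and forces $\beta(1-p')+Q>0$. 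These are exactly the two positivity hypotheses in the statement, and they are precisely what makes both integrals admissible.

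Combining the two computations I obtain
\[
\mathcal{D}_{1}(R)=\left(\frac{|\S|}{\alpha+Q}\right)^{\frac{1}{q}}\left(\frac{|\S|}{\beta(1-p')+Q}\right)^{\frac{1}{p'}}R^{\frac{Q+\alpha}{q}+\frac{Q+\beta(1-p')}{p'}}.
\]
The decisive point is that the balance condition $\frac{Q+\alpha}{q}+\frac{Q+\beta(1-p')}{p'}=0$ annihilates the power of $R$, so $\mathcal{D}_{1}$ is constant in $R$. A constant is trivially non-decreasing, hence the monotonicity requirement of Theorem~\ref{integralhar1} holds automatically, and $D_{1}=\inf_{x\neq 0}\mathcal{D}_{1}(|x|)$ equals that very constant, which is strictly positive. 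Substituting this value of $D_{1}$ into the two-sided estimate \eqref{ocenc1} then yields at once the claimed bounds on the best constant $C_{1}$.

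I expect no serious obstacle here: the proof is a verification rather than a new argument. The one place deserving attention is that the homogeneity (scaling) relation linking $\alpha,\beta,p,q$ is exactly the condition that removes the $R$-dependence of $\mathcal{D}_{1}$, thereby converting the abstract criterion of Theorem~\ref{integralhar1} into the explicit sufficient condition stated here; one should also keep in mind that $p<0$ gives $p'=p/(p-1)\in(0,1)$, so $1-p'>0$ and all exponents are treated consistently. Thus the main difficulty is the conceptual one of recognising the reduction.
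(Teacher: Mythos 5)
Your proposal is correct and follows essentially the same route as the paper's own proof: both reduce the statement to Theorem~\ref{integralhar1} with $u(x)=|x|^{\alpha}$, $v(x)=|x|^{\beta}$, compute the two ball integrals via the polar decomposition \eqref{EQ:polar}, and observe that the balance condition $\frac{Q+\alpha}{q}+\frac{Q+\beta(1-p')}{p'}=0$ makes $\mathcal{D}_{1}$ constant, hence non-decreasing with positive infimum, after which \eqref{ocenc1} gives the stated bounds on $C_{1}$.
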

\begin{proof}
Let us show that the condition (2.4) is satisfied with $u(x)=|x|^{\alpha}$ and $v(x)=|x|^{\beta}$. We calculate the first integral in (2.4):
\begin{equation}
\begin{split}
\int_{B(0,|x|)}u(y)dy&=\int_{B(0,|x|)}|y|^{\alpha}dy
\stackrel{(3.2)}=\int_{0}^{|x|}\int_{\S}r^{\alpha}r^{Q-1}dr d\sigma=\frac{|\S|}{Q+\alpha}|x|^{Q+\alpha},
\end{split}
\end{equation}
where $|\S|$ is the area of the unit quasi-sphere in $\mathbb{G}$.
Then,
\begin{equation*}
\begin{split}
\int_{B(0,|x|)}v^{1-p'}(y)dy&=\int_{B(0,|x|)}|y|^{\beta(1-p')}dy\\&
\stackrel{(3.2)}=\int_{0}^{|x|}\int_{\S}r^{\beta(1-p')}r^{Q-1}dr d\sigma\\&=\frac{|\S|}{Q+\beta(1-p')}|x|^{Q+\beta(1-p')}.
\end{split}
\end{equation*}
Finally, by using above facts and $\frac{Q+\alpha}{q}+\frac{Q+\beta(1-p')}{p'}=0$, we have
\begin{equation*}
\begin{split}
\mathcal{D}_{1}(|x|)&=\left(\frac{|\S|}{\alpha+Q}\right)^{\frac{1}{q}}\left(\frac{|\S|}{Q+\beta(1-p')}\right)^{\frac{1}{p'}}\left[|x|^{\frac{Q+\alpha}{q}+\frac{Q+\beta(1-p')}{p'}}\right]=\left(\frac{|\S|}{\alpha+Q}\right)^{\frac{1}{q}}\left(\frac{|\S|}{Q+\beta(1-p')}\right)^{\frac{1}{p'}},
\end{split}
\end{equation*}
which shows that $\mathcal{D}_{1}(|x|)$ is a non-decreasing function. Then 

$$D_{1}=\inf_{x\neq a}\mathcal{D}_{1}(|x|)=\left(\frac{|\S|}{\alpha+Q}\right)^{\frac{1}{q}}\left(\frac{|\S|}{Q+\beta(1-p')}\right)^{\frac{1}{p'}}>0.$$
Therefore, by (2.5) we have 
$$D_{1}\geq C_{1}\geq|p|^{\frac{1}{q}}(p')^{\frac{1}{p'}}D_{1},$$
where $D_{1}=\left(\frac{|\S|}{\alpha+Q}\right)^{\frac{1}{q}}\left(\frac{|\S|}{Q+\beta(1-p')}\right)^{\frac{1}{p'}}$ thereby,  completing the proof.
\end{proof}
Now we obtain the  conjugate reverse integral Hardy inequality on homogeneous Lie
groups.
\begin{thm}
Let $\mathbb{G}$ be a homogeneous Lie group of homogeneous dimension $Q$ with a quasi-norm $|\cdot|$. Assume that $q\leq p<0$ and $\alpha,\beta\in\mathbb{R}$. Then the reverse conjugate
integral Hardy inequality
\begin{equation}\label{inhar2}
\left[\int_{\mathbb{G}}\left(\int_{\mathbb{G}\setminus B(0,|x|)}f(y)dy\right)^{q}|x|^{\alpha}dx\right]^{\frac{1}{q}}\geq C_{2}\left(\int_{\mathbb{G}}f^{p}(x)|x|^{\beta}dx\right)^{\frac{1}{p}},
\end{equation}
holds for some $C_{2}>0$ and for all non-negative measurable functions $f$, if  $\alpha+Q<0$, $\beta(1-p')+Q<0$ and $\frac{Q+\alpha}{q}+\frac{Q+\beta(1-p')}{p'}=0.$
Moreover, the biggest constant $C_{2}$ for (3.6) satisfies
$$\left(\frac{|\S|}{|\alpha+Q|}\right)^{\frac{1}{q}}\left(\frac{|\S|}{|Q+\beta(1-p')|}\right)^{\frac{1}{p'}}\geq C_{2}\geq|p|^{\frac{1}{q}}(p')^{\frac{1}{p'}}\left(\frac{|\S|}{|\alpha+Q|}\right)^{\frac{1}{q}}\left(\frac{|\S|}{|Q+\beta(1-p')|}\right)^{\frac{1}{p'}}. $$
\end{thm}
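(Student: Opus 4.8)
The plan is to deduce this statement directly from the general conjugate reverse Hardy inequality on metric measure spaces, Theorem \ref{integralhar2}, in complete parallel with the way the interior inequality \eqref{inhar1} was obtained. I would take $\X=\mathbb{G}$, let $|\cdot|$ play the role of the distance to the origin, use the polarisation formula \eqref{EQ:polar}, and set $u(x)=|x|^{\alpha}$ and $v(x)=|x|^{\beta}$. The whole matter then reduces to checking the hypothesis of Theorem \ref{integralhar2}, namely that the quantity $\mathcal{D}_{2}(|x|)$ in \eqref{D2} is non-increasing and that $D_{2}>0$.

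First I would compute the two exterior integrals in \eqref{D2} using \eqref{EQ:polar}. For the weight $u$,
\begin{equation*}
\int_{\mathbb{G}\setminus B(0,|x|)}|y|^{\alpha}dy=|\S|\int_{|x|}^{\infty}r^{\alpha+Q-1}dr,
\end{equation*}
and the hypothesis $\alpha+Q<0$ is exactly what makes this radial integral converge at infinity, giving the value $\frac{|\S|}{|\alpha+Q|}|x|^{\alpha+Q}$. In the same way, $\beta(1-p')+Q<0$ ensures that $\int_{\mathbb{G}\setminus B(0,|x|)}|y|^{\beta(1-p')}dy$ converges and equals $\frac{|\S|}{|Q+\beta(1-p')|}|x|^{Q+\beta(1-p')}$.

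Assembling these into $\mathcal{D}_{2}(|x|)$ and invoking the balance relation $\frac{Q+\alpha}{q}+\frac{Q+\beta(1-p')}{p'}=0$, the exponent of $|x|$ collapses to zero, so that $\mathcal{D}_{2}$ equals the constant $\left(\frac{|\S|}{|\alpha+Q|}\right)^{\frac{1}{q}}\left(\frac{|\S|}{|Q+\beta(1-p')|}\right)^{\frac{1}{p'}}$. A constant function is in particular non-increasing, and its infimum $D_{2}$ is this same strictly positive number. Thus the hypotheses of Theorem \ref{integralhar2} hold, and its two-sided estimate $D_{2}\geq C_{2}\geq |p|^{\frac{1}{q}}(p')^{\frac{1}{p'}}D_{2}$ gives precisely the claimed bounds for $C_{2}$.

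The only genuinely new point---and the main place to be careful---is the direction of the integrability conditions. For the interior inequality \eqref{inhar1} the integration is over the ball $B(0,|x|)$, so convergence occurs at the origin and one needs $\alpha+Q>0$; here the integration is over the complement $\mathbb{G}\setminus B(0,|x|)$, so convergence is at infinity and the conditions reverse to $\alpha+Q<0$ and $\beta(1-p')+Q<0$. This sign reversal must be tracked when evaluating $\int_{|x|}^{\infty}r^{\alpha+Q-1}dr=-\frac{|x|^{\alpha+Q}}{\alpha+Q}=\frac{|x|^{\alpha+Q}}{|\alpha+Q|}$, which is what produces the absolute values in the final constant. Apart from this bookkeeping the argument is identical to the interior case, so I would omit the remaining routine computations.
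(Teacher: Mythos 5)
Your proposal is correct and takes essentially the same approach as the paper: the paper's entire proof is the remark that the argument is ``similar to the previous case, where we use Theorem 2.4 instead of Theorem 2.2,'' and your verification---computing the exterior integrals via the polar decomposition, noting that $\alpha+Q<0$ and $\beta(1-p')+Q<0$ now give convergence at infinity and produce the absolute values, and checking that the balance condition makes $\mathcal{D}_{2}$ a positive constant (hence non-increasing)---is precisely the routine computation the paper omits. No gaps to report.
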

\begin{proof}
Proof of this theorem is similar to the previous case, where we use Theorem 2.4 instead of of Theorem 2.2.
\end{proof}
\subsection{The reverse Hardy-Littlewood-Sobolev inequality and Stein-Weiss  inequality}

In this sub-section we obtain the reverse Hardy-Littlewood-Sobolev inequality and Stein-Weiss  inequality on Euclidean space and homogeneous Lie groups. 

Let us introduce the Riesz operator on homogeneous Lie groups in the following form:
\begin{equation}
I_{\lambda,|\cdot|}u(x)=|x|^{\lambda}*u=\int_{\mathbb{G}}|y^{-1} x|^{\lambda}u(y)dy,\,\,\,\lambda<0,
\end{equation}
where $*$ is the convolution. Hence, by taking $\G=(\mathbb{R}^{n},+)$, $Q=n$ and $|\cdot|=|\cdot|_{E}$ ($|\cdot|_{E}$ is the Euclidean distance), we get the Riesz operator on Euclidean space:
\begin{equation}
I_{\lambda,|\cdot|_{E}}u(x)=|x|_{E}^{\lambda}*u=\int_{\mathbb{G}}|x-y|_{E}^{\lambda}u(y)dy,\,\,\,\lambda<0.
\end{equation}
Firstly, let us present the Hardy-Littlewood-Sobolev inequality on Euclidean space.
\begin{thm}[The reverse Hardy-Littlewood-Sobolev inequality on $\mathbb{R}^{n}$]\label{hlstheoremeuc}
Assume that $n\geq1$, $q< p<0$, $\lambda<0$ such that $\frac{1}{p'}+\frac{1}{q}+\frac{\lambda}{n}=0$, where $\frac{1}{p}+\frac{1}{p'}=1$ and $\frac{1}{q}+\frac{1}{q'}=1$. Then for all non-negative functions $f\in L^{q'}(\mathbb{R}^{n})$ and $0<\int_{\mathbb{R}^{n}}h^{p}(x)dx<\infty$, we get
\begin{equation}\label{hlseuc}
    \int_{\mathbb{R}^{n}}\int_{\mathbb{R}^{n}}f(x)|x-y|_{E}^{\lambda}h(y)dxdy\geq C\left(\int_{\mathbb{R}^{n}}f^{q'}(x)dx\right)^{\frac{1}{q'}}\left(\int_{\mathbb{R}^{n}}h^{p}(x)dx\right)^{\frac{1}{p}},
\end{equation}
where $C$ is a positive constant independent of $f$ and $h$.
\end{thm}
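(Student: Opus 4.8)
The plan is to deduce the bilinear lower bound \eqref{hlseuc} from the reverse integral Hardy inequality \eqref{inhar1} combined with the reverse H\"older inequality of Theorem \ref{Hol}. The starting observation is that, since $f,h\ge 0$ and $|x-y|_{E}^{\lambda}\ge 0$, I may freely restrict the domain of integration to the region $\{(x,y):|y|<|x|\}$, which can only decrease the left-hand side and hence is harmless for a lower bound. On this region the Euclidean triangle inequality gives $|x-y|_{E}\le |x|+|y|\le 2|x|$, and because $\lambda<0$ the map $t\mapsto t^{\lambda}$ is decreasing, so that $|x-y|_{E}^{\lambda}\ge (2|x|)^{\lambda}=2^{\lambda}|x|^{\lambda}$. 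Consequently
\begin{equation*}
\int_{\mathbb{R}^{n}}\int_{\mathbb{R}^{n}}f(x)|x-y|_{E}^{\lambda}h(y)\,dx\,dy\ge 2^{\lambda}\int_{\mathbb{R}^{n}}f(x)|x|^{\lambda}\left(\int_{B(0,|x|)}h(y)\,dy\right)dx.
\end{equation*}

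Next I would separate the factor $f$ from the Hardy average. Writing $G(x):=|x|^{\lambda}\int_{B(0,|x|)}h(y)\,dy\ge 0$ and applying the reverse H\"older inequality \eqref{Holin} with the exponent pair $(q,q')$ --- legitimate since $q<0$ forces $q'=q/(q-1)>0$, with the negative exponent attached to $G$ and the positive one to $f$ --- yields
\begin{equation*}
\int_{\mathbb{R}^{n}}f(x)G(x)\,dx\ge \left(\int_{\mathbb{R}^{n}}f^{q'}(x)\,dx\right)^{\frac{1}{q'}}\left(\int_{\mathbb{R}^{n}}G^{q}(x)\,dx\right)^{\frac{1}{q}}.
\end{equation*}
The second factor is precisely the left-hand side of the reverse integral Hardy inequality \eqref{inhar1}: indeed $\int_{\mathbb{R}^{n}}G^{q}\,dx=\int_{\mathbb{R}^{n}}\big(\int_{B(0,|x|)}h\big)^{q}|x|^{\lambda q}\,dx$, which matches \eqref{inhar1} with $Q=n$, weight $u(x)=|x|^{\alpha}$ for $\alpha=\lambda q$, weight $v\equiv 1$ (that is, $\beta=0$), and $f$ replaced by $h$.

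It then remains to verify that the hypotheses of \eqref{inhar1} hold under the present assumptions. Since $\lambda<0$ and $q<0$ we have $\alpha+Q=\lambda q+n>0$, while $\beta(1-p')+Q=n>0$ trivially; finally the balance condition $\frac{Q+\alpha}{q}+\frac{Q+\beta(1-p')}{p'}=0$ reads $\frac{n+\lambda q}{q}+\frac{n}{p'}=\frac{n}{q}+\lambda+\frac{n}{p'}=0$, which is exactly the assumed relation $\frac{1}{p'}+\frac{1}{q}+\frac{\lambda}{n}=0$ after dividing by $n$. Thus \eqref{inhar1} applies and gives $\big(\int G^{q}\big)^{1/q}\ge C_{1}\big(\int h^{p}\big)^{1/p}$ for some $C_{1}>0$; multiplying by the positive quantity $(\int f^{q'})^{1/q'}$ and chaining the three displayed estimates produces \eqref{hlseuc} with $C=2^{\lambda}C_{1}>0$. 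The point requiring the most care is the bookkeeping of these reverse inequalities, where the exponents $1/q$ and $1/p$ are negative: one must track the positivity and finiteness conditions ($0<\int f^{q'}<\infty$, $0<\int h^{p}<\infty$, and finiteness of $\int G^{q}$) under which Theorem \ref{Hol} and \eqref{inhar1} are valid, and confirm that passing to the subregion $\{|y|<|x|\}$ and applying the decreasing-kernel bound genuinely weakens the left-hand side rather than reversing an inequality.
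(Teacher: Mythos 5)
Your proof is correct and follows essentially the same route as the paper's: the reverse H\"{o}lder inequality \eqref{Holin} with the exponent pair $(q,q')$ to split off $f$, restriction of the integration to the region $\{|y|_{E}<|x|_{E}\}$ together with the kernel bound $|x-y|_{E}^{\lambda}\geq 2^{\lambda}|x|_{E}^{\lambda}$, and the reverse integral Hardy inequality with weights $u(x)=|x|_{E}^{\lambda q}$, $v\equiv 1$. The only cosmetic differences are the order of operations --- you apply the pointwise bounds to the non-negative integrand before H\"{o}lder, thereby avoiding the double sign-flips that the paper performs inside the negative-power integrals --- and that you quote the ready-made inequality \eqref{inhar1} where the paper instead verifies condition \eqref{D1} of Theorem \ref{integralhar1} directly by the same weight computation.
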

\begin{proof}
By using the reverse H\"{o}lder inequality with $\frac{1}{q}+\frac{1}{q'}=1$, we calculate
\begin{equation*}
\begin{split}
 \int_{\mathbb{R}^{n}}\int_{\mathbb{R}^{n}}f(x)|x-y|_{E}^{\lambda}h(y)dydx
\stackrel{(2.2)}\geq\left(\int_{\R}\left(\int_{\R}|x-y|_{E}^{\lambda}h(y)dy\right)^{q}dx\right)^{\frac{1}{q}}\|f\|_{L^{q'}(\R)}.
\end{split}
\end{equation*}
Thus for (3.9), it is enough to show that
\begin{equation*}
\left(\int_{\mathbb{R}^{n}}\left(\int_{\R}|x-y|_{E}^{\lambda}h(y)dy\right)^{q}dx\right)^{\frac{1}{q}}\geq C\left(\int_{\R}h^{p}(x)dx\right)^{\frac{1}{p}}.
\end{equation*}
By direct calculation, we have
\begin{equation}
\left(\int_{\R}\left(\int_{\R}|x-y|_{E}^{\lambda}h(y)dy\right)^{q}dx\right)^{\frac{1}{q}}
\stackrel{q<0}\geq\left(\int_{\R}\left(\int_{B_{E}\left(0,|x|_{E}\right)}|x-y|_{E}^{\lambda}h(y)dy\right)^{q}dx\right)^{\frac{1}{q}},
\end{equation}
where $B_{E}(0,|x|_{E})$ is the Euclidean ball centered at $0$ with radius $|x|_{E}$.
By using $|y|_{E}\leq|x|_{E}$, we get
\begin{equation}
|x-y|_{E}\leq |x|_{E} +|y|_{E}\leq 2|x|_{E}.
\end{equation}
Then for any $\lambda<0$, we have
\begin{equation}
    \begin{split}
\left(\int_{\R}\left(\int_{\R}|x-y|_{E}^{\lambda}h(y)dy\right)^{q}dx\right)^{\frac{1}{q}}
&\stackrel{q<0}\geq\left(\int_{\R}\left(\int_{B_{E}\left(0,|x|_{E}\right)}|x-y|_{E}^{\lambda}h(y)dy\right)^{q}dx\right)^{\frac{1}{q}}\\&
\geq 2^{\lambda}\left(\int_{\R}|x|_{E}^{\lambda q}\left(\int_{B_{E}\left(0,|x|_{E}\right)}h(y)dy\right)^{q}dx\right)^{\frac{1}{q}}.
    \end{split}
\end{equation}
If condition (2.4) in Theorem 2.2 with $u(x)=|x|^{\lambda q}$ and $v(x)=1$ in (2.3) is satisfied, then we have
$$\left(\int_{\R}|x|_{E}^{\lambda q}\left(\int_{B_{E}\left(0,|x|_{E}\right)}h(y)dy\right)^{q}dx\right)^{\frac{1}{q}}\geq C\left(\int_{\R}h^{p}(x)dx\right)^{\frac{1}{p}}.$$
Let us show that the condition (2.4) is satisfied. From the assumption, we have
\begin{equation}
  0=\frac{1}{p'}+\frac{1}{q}+\frac{\lambda}{n}\stackrel{\frac{1}{p'}>0}> \frac{1}{q}+\frac{\lambda}{n},
\end{equation}
which means $n+\lambda q>0.$
By using this fact,  we obtain
\begin{equation}
    \begin{split}
        \int_{{B_{E}\left(0,|x|_{E}\right)}}u(y)dy&=\int_{{B\left(0,|x|_{E}\right)}}|y|_{E}^{\lambda q}dy\\&
\stackrel{(3.2)}=\int_{0}^{|x|_{E}}\int_{\S}r^{\lambda q}r^{n-1}dr d\sigma\\&
=\frac{|\S|}{n+\lambda q}|x|_{E}^{n+\lambda q},
    \end{split}
\end{equation}
and
\begin{equation}
    \begin{split}
       \int_{{B_{E}\left(0,|x|_{E}\right)}}v^{1-p'}(y)dy= \int_{{B_{E}\left(0,|x|_{E}\right)}}1dy=|\S||x|_{E}^{n}.
    \end{split}
\end{equation}
Finally, by using the assumption $\frac{1}{p'}+\frac{1}{q}+\frac{\lambda}{n}=0$,
\begin{equation}
    \begin{split}
        \mathcal{D}_{1}(|x|_{E})=\left(\frac{|\S|}{n+\lambda q}\right)^{\frac{1}{q}}\left(|\S|\right)^{\frac{1}{p'}}|x|_{E}^{\frac{n}{p'}+\frac{n+\lambda q}{q}}=\left(\frac{|\S|}{n+\lambda q}\right)^{\frac{1}{q}}|\S|^{\frac{1}{p'}},
    \end{split}
\end{equation}
which implies, $\mathcal{D}_{1}(|x|_{E})$ is a non-decreasing function. Thus, 
$$D_{1}=\inf_{x\neq 0}\mathcal{D}_{1}(|x|_{E})=\left(\frac{|\S|}{n+\lambda q}\right)^{\frac{1}{q}}|\S|^{\frac{1}{p'}}>0,$$
completing the proof.
\end{proof}
\begin{rem}
Inequality (3.10) seems  to be new even in the  Euclidean space.
\end{rem}
Also, let us now present the reverse Hardy-Littlewood-Sobolev inequality on $\G$. 
\begin{thm}[The reverse Hardy-Littlewood-Sobolev inequality on $\G$]\label{hlstheorem}
Let $\mathbb{G}$ be a homogeneous Lie group of homogeneous dimension $Q\geq1$ with arbitrary quasi-norm $|\cdot|$. Assume that $q< p<0$, $\lambda<0$ such that $\frac{1}{p'}+\frac{1}{q}+\frac{\lambda}{Q}=0$, where $\frac{1}{p}+\frac{1}{p'}=1$ and $\frac{1}{q}+\frac{1}{q'}=1$. Then for all non-negative functions $f\in L^{q'}(\mathbb{G})$ and $0<\int_{\G}h^{p}(x)dx<\infty$, we get
\begin{equation*}\label{hls}
    \int_{\G}\int_{\G}f(x)|y^{-1}x|^{\lambda}h(y)dxdy\geq C\left(\int_{\G}f^{q'}(x)dx\right)^{\frac{1}{q'}}\left(\int_{\G}h^{p}(x)dx\right)^{\frac{1}{p}},
\end{equation*}
where $C$ is a positive constant independent of $f$ and $h$.
\end{thm}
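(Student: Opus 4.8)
The plan is to follow the same scheme as in the proof of the Euclidean reverse Hardy-Littlewood-Sobolev inequality (Theorem 3.4), reducing everything to the reverse integral Hardy inequality of Theorem 2.2 on $\G$. First I would write the double integral as $\int_\G f(x)\left(\int_\G |y^{-1}x|^{\lambda}h(y)\,dy\right)dx$ and apply the reverse H\"older inequality (Theorem 2.1) in the $x$-variable with the conjugate pair $\frac{1}{q}+\frac{1}{q'}=1$, pairing $f$ against the inner Riesz potential. This peels off the factor $\left(\int_\G f^{q'}\right)^{\frac{1}{q'}}$ and leaves me to establish
\[
\left(\int_\G\left(\int_\G |y^{-1}x|^{\lambda}h(y)\,dy\right)^{q} dx\right)^{\frac{1}{q}}\geq C\left(\int_\G h^{p}(x)\,dx\right)^{\frac{1}{p}}.
\]

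Next, since $q<0$, I would shrink the inner domain from $\G$ to the quasi-ball $B(0,|x|)$: restricting the domain decreases the nonnegative inner integral, and because both the outer exponent $q$ and then $\tfrac{1}{q}$ are negative, this restriction only makes the left-hand side smaller, so the inequality is preserved in the right direction. On $B(0,|x|)$ I would invoke the quasi-triangle inequality of Proposition 3.1 together with the symmetry $|y^{-1}|=|y|$ to estimate, for $|y|<|x|$, that $|y^{-1}x|\leq C(|y|+|x|)\leq 2C|x|$; since $\lambda<0$ this yields $|y^{-1}x|^{\lambda}\geq (2C)^{\lambda}|x|^{\lambda}$. Tracking the two further sign reversals from $q<0$ and $\tfrac1q<0$, this reduces the goal (up to a harmless positive constant that changes from line to line) to
\[
\left(\int_\G |x|^{\lambda q}\left(\int_{B(0,|x|)} h(y)\,dy\right)^{q} dx\right)^{\frac{1}{q}}\geq C\left(\int_\G h^{p}(x)\,dx\right)^{\frac{1}{p}}.
\]

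Finally, I would apply Theorem 2.2 with the weights $u(x)=|x|^{\lambda q}$ and $v(x)=1$, so that it remains only to verify the admissibility condition (2.4). Using the polarisation formula (3.2) I would compute $\int_{B(0,|x|)}u=\frac{|\S|}{Q+\lambda q}|x|^{Q+\lambda q}$ and $\int_{B(0,|x|)}v^{1-p'}=\frac{|\S|}{Q}|x|^{Q}$. Here the balance hypothesis $\frac{1}{p'}+\frac{1}{q}+\frac{\lambda}{Q}=0$ is used twice: once in the form $\frac{1}{q}+\frac{\lambda}{Q}=-\frac{1}{p'}<0$ (recall $p'>0$ when $p<0$) to guarantee $Q+\lambda q>0$, so that the first integral converges and $D_1>0$; and once to check that the exponent of $|x|$ in $\mathcal{D}_{1}(|x|)=\left(\int_{B(0,|x|)}u\right)^{\frac1q}\left(\int_{B(0,|x|)}v^{1-p'}\right)^{\frac{1}{p'}}$ is $Q\left(\frac{1}{q}+\frac{1}{p'}\right)+\lambda=0$. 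Thus $\mathcal{D}_{1}$ is constant, hence non-decreasing, and $D_1>0$, so Theorem 2.2 applies and closes the argument.

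I would expect no serious obstacle here, since the argument is structurally identical to the Euclidean case. The only genuinely new point is the passage from the exact Euclidean triangle inequality to the quasi-triangle inequality on $\G$, which is handled cleanly by Proposition 3.1 and affects only the value of the final constant through the factor $(2C)^{\lambda}$. The one step demanding real care is the bookkeeping of inequality directions under the negative exponents $q$, $\tfrac1q$, and $1-p'$: each sign flip must be tracked so that the restriction to $B(0,|x|)$, the quasi-triangle bound, and the invocation of Theorem 2.2 all point the same way.
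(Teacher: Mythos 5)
Your proposal is correct and is essentially the paper's own proof: the paper proves this theorem by repeating the Euclidean argument of Theorem \ref{hlstheoremeuc} verbatim, substituting the quasi-triangle inequality of Proposition \ref{prop_quasi_norm} (together with $|y^{-1}|=|y|$) and the polar decomposition \eqref{EQ:polar}, exactly as you do, and then invoking Theorem \ref{integralhar1} with $u(x)=|x|^{\lambda q}$, $v\equiv 1$. Your sign-tracking under $q<0$ and your verification that $\mathcal{D}_1$ is constant with $Q+\lambda q>0$ match the paper's reduction step for step (your value $\frac{|\S|}{Q}|x|^{Q}$ for $\int_{B(0,|x|)}v^{1-p'}\,dy$ is in fact the correct one, fixing a harmless dropped factor of $\frac1Q$ in the paper's Euclidean computation).
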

\begin{proof}
The proof of this theorem is similar to Theorem \ref{hlstheoremeuc}, but here we use Proposition \ref{prop_quasi_norm} and the polar decomposition formula \eqref{EQ:polar}.
\end{proof}

 Let us now show the reverse Stein-Weiss  inequality on $\R$.
\begin{thm}[The reverse Stein-Weiss inequality on $\R$]\label{stein-weiss5euc}

Assume that $n\geq1$, $q\leq p<0$, $\lambda<0$,  and $\frac{1}{p'}+\frac{1}{q}+\frac{\alpha+\beta+\lambda}{n}=0$, where $\frac{1}{p}+\frac{1}{p'}=1$ and $\frac{1}{q}+\frac{1}{q'}=1$. Then for all non-negative functions $f\in L^{q'}(\R)$ and $0<\int_{\R}h^{p}(x)dx<\infty$,
we have
\begin{equation}\label{stein-weiss124euc}
    \int_{\R}\int_{\R}|x|_{E}^{\alpha}f(x)|x-y|_{E}^{\lambda}h(y)|y|_{E}^{\beta}dxdy\geq C\left(\int_{\R}f^{q'}(x)dx\right)^{\frac{1}{q'}}\left(\int_{\R}h^{p}(x)dx\right)^{\frac{1}{p}},
\end{equation}
if  one of the following conditions is satisfied:
\begin{itemize}
	\item[(a)] $\beta>-\frac{n}{p'}$;

	\item[(b)]  $ \alpha>-\frac{n}{q}$.

\end{itemize}
\end{thm}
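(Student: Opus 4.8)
The plan is to follow the scheme of the proof of Theorem~\ref{hlstheoremeuc}: peel off the factor $f$ by the reverse H\"older inequality and then reduce the remaining weighted estimate to one of the two reverse integral Hardy inequalities, Theorem~\ref{integralhar1} or Theorem~\ref{integralhar2}, according to which of the hypotheses (a), (b) is in force. First I would write the left-hand side as $\int_{\R} f(x)G(x)\,dx$ with $G(x):=|x|_E^{\alpha}\int_{\R}|x-y|_E^{\lambda}h(y)|y|_E^{\beta}\,dy$, and apply the reverse H\"older inequality \eqref{Holin} in the $x$-variable for the conjugate pair $\tfrac1q+\tfrac1{q'}=1$ (with $q<0$, $q'>0$), obtaining $\int_{\R}fG\,dx\ge\big(\int_{\R}G^q\,dx\big)^{1/q}\|f\|_{L^{q'}(\R)}$. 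Thus it suffices to prove $\big(\int_{\R}G^q\,dx\big)^{1/q}\ge C\big(\int_{\R}h^p\,dx\big)^{1/p}$; if the double integral is infinite the claim is trivial, so I may assume the quantities involved are finite and positive.

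In Case~(a), $\beta>-\tfrac{n}{p'}$, I use that $q<0$ makes shrinking the inner domain only increase the $q$-th power, bounding $G(x)\ge|x|_E^{\alpha}\int_{B_E(0,|x|_E)}|x-y|_E^{\lambda}h(y)|y|_E^{\beta}\,dy$, and then use $|x-y|_E\le 2|x|_E$ for $|y|_E\le|x|_E$ together with $\lambda<0$ to get $|x-y|_E^{\lambda}\ge 2^{\lambda}|x|_E^{\lambda}$. This yields $\big(\int_{\R}G^q\,dx\big)^{1/q}\ge 2^{\lambda}\big(\int_{\R}|x|_E^{(\alpha+\lambda)q}\big(\int_{B_E(0,|x|_E)}h(y)|y|_E^{\beta}\,dy\big)^q\,dx\big)^{1/q}$. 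I would then apply Theorem~\ref{integralhar1} to $\tilde h(y):=h(y)|y|_E^{\beta}$ with $u(x)=|x|_E^{(\alpha+\lambda)q}$ and $v(y)=|y|_E^{-\beta p}$, chosen exactly so that $\int_{\R}\tilde h^{\,p}v\,dy=\int_{\R}h^{p}\,dy$. Since $v^{1-p'}=|y|_E^{\beta p'}$, the polar formula \eqref{EQ:polar} gives $\int_{B_E(0,|x|_E)}u=\tfrac{|\S|}{n+(\alpha+\lambda)q}|x|_E^{\,n+(\alpha+\lambda)q}$ and $\int_{B_E(0,|x|_E)}v^{1-p'}=\tfrac{|\S|}{n+\beta p'}|x|_E^{\,n+\beta p'}$, so the exponent of $|x|_E$ in $\mathcal D_1$ equals $\tfrac{n}{q}+\tfrac{n}{p'}+\alpha+\beta+\lambda=0$ by the scaling hypothesis; hence $\mathcal D_1$ is constant (so non-decreasing). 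The positivity \eqref{D1} reduces to $n+\beta p'>0$ and $n+(\alpha+\lambda)q>0$; the first is precisely $\beta>-\tfrac{n}{p'}$, and eliminating $\alpha+\lambda$ via the scaling relation gives $n+(\alpha+\lambda)q=-q\big(\beta+\tfrac{n}{p'}\big)>0$ because $q<0$. Thus Theorem~\ref{integralhar1} applies and closes this case.

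Case~(b), $\alpha>-\tfrac{n}{q}$, is the mirror image: I keep the full weight $|x|_E^{\alpha q}$ on the outer variable but instead restrict the inner integral to $\R\setminus B_E(0,|x|_E)$, where $|x|_E\le|y|_E$ yields $|x-y|_E\le 2|y|_E$ and hence $|x-y|_E^{\lambda}\ge 2^{\lambda}|y|_E^{\lambda}$. This produces $\big(\int_{\R}G^q\,dx\big)^{1/q}\ge 2^{\lambda}\big(\int_{\R}|x|_E^{\alpha q}\big(\int_{\R\setminus B_E(0,|x|_E)}h(y)|y|_E^{\beta+\lambda}\,dy\big)^q\,dx\big)^{1/q}$, which is exactly the shape handled by the conjugate inequality Theorem~\ref{integralhar2}, applied with $\tilde h(y)=h(y)|y|_E^{\beta+\lambda}$, $u(x)=|x|_E^{\alpha q}$ and $v(y)=|y|_E^{-(\beta+\lambda)p}$ (so again $\int_{\R}\tilde h^{\,p}v\,dy=\int_{\R}h^{p}\,dy$). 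The tail integrals converge at infinity precisely when $n+\alpha q<0$ and $n+(\beta+\lambda)p'<0$; the first is equivalent to $\alpha>-\tfrac{n}{q}$, and eliminating $\beta+\lambda$ via the scaling relation gives $n+(\beta+\lambda)p'=-p'\big(\alpha+\tfrac{n}{q}\big)<0$. As before the exponent of $|x|_E$ in $\mathcal D_2$ vanishes, so $\mathcal D_2$ is constant (non-increasing) and \eqref{D2} holds, and Theorem~\ref{integralhar2} finishes this case.

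The step I expect to be the main obstacle is the bookkeeping of inequality directions, since every exponent in sight ($p,q,\lambda<0$) reverses monotonicity while $p',q'>0$ do not: the domain restriction, the pointwise bound on $|x-y|_E^{\lambda}$, and the outer $q$-th root each flip or preserve the inequality and must be chained consistently. The conceptual heart of the argument, and the reason two cases appear, is recognising that the alternative hypotheses $\beta>-\tfrac{n}{p'}$ and $\alpha>-\tfrac{n}{q}$ are exactly the positivity requirements $D_1>0$ and $D_2>0$ of the two reverse Hardy inequalities, which in turn dictates whether to bound $|x-y|_E^{\lambda}$ from below by $2^{\lambda}|x|_E^{\lambda}$ on the ball or by $2^{\lambda}|y|_E^{\lambda}$ on its complement.
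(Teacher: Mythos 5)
Your proposal is correct and follows essentially the same route as the paper's proof: reverse H\"older to peel off $f$, then in case (a) restrict to the ball $B_E(0,|x|_E)$ with the bound $|x-y|_E^{\lambda}\geq 2^{\lambda}|x|_E^{\lambda}$ and apply Theorem~\ref{integralhar1} with $u(x)=|x|_E^{(\alpha+\lambda)q}$, $v(y)=|y|_E^{-\beta p}$, and in case (b) restrict to the complement with $|x-y|_E^{\lambda}\geq 2^{\lambda}|y|_E^{\lambda}$ and apply Theorem~\ref{integralhar2} with $u(x)=|x|_E^{\alpha q}$, $v(y)=|y|_E^{-(\beta+\lambda)p}$. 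Your verification that the hypotheses (a), (b) give exactly $D_1>0$, respectively $D_2>0$, matches the paper's computation (you eliminate $\alpha+\lambda$, resp. $\beta+\lambda$, algebraically where the paper argues via an inequality chain, but the content is identical).
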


\begin{proof}
Similarly to Theorem 3.3, by using the reverse H\"{o}lder inequality with $\frac{1}{q}+\frac{1}{q'}=1$, we calculate
\begin{equation*}
\begin{split}
\int_{\R}\int_{\R}|x|_{E}^{\alpha}f(x)|x-y|_{E}^{\lambda}&h(y)|y|_{E}^{\beta}dydx=\int_{\R}\left(\int_{\R}|x|_{E}^{\alpha}|x-y|_{E}^{\lambda}h(y)|y|_{E}^{\beta}dy\right)f(x)dx\\&
\geq\left(\int_{\R}\left(\int_{\R}|x|_{E}^{\alpha}|x-y|_{E}^{\lambda}h(y)|y|_{E}^{\beta}dy\right)^{q}dx\right)^{\frac{1}{q}}\|f\|_{L^{q'}(\R)}.
\end{split}
\end{equation*}
Thus for (3.18), it is enough to show that
\begin{equation*}
\left(\int_{\R}\left(\int_{\R}|x|_{E}^{\alpha}|x-y|_{E}^{\lambda}h(y)|y|_{E}^{\beta}dy\right)^{q}dx\right)^{\frac{1}{q}}\geq C\left(\int_{\R}h^{p}(x)dx\right)^{\frac{1}{p}},
\end{equation*}
and by substituting $z(y)=h(y)|y|_{E}^{\beta}$, this is equivalent to
\begin{equation*}
\int_{\R}\left(\int_{\R}|x|_{E}^{\alpha}|x-y|_{E}^{\lambda}z(y)dy\right)^{q}dx\leq C\left(\int_{\R}|y|_{E}^{-\beta p}z^{p}(x)dx\right)^{\frac{q}{p}}.
\end{equation*}
We have that
\begin{equation*}
\int_{\R}|x|_{E}^{\alpha}|x-y|_{E}^{\lambda}z(y)dy\geq\int_{B_{E}\left(0,|x|_{E}\right)}|x|_{E}^{\alpha}|x-y|_{E}^{\lambda}z(y)dy,
\end{equation*}
then
\begin{equation*}
\left(\int_{\R}|x|_{E}^{\alpha}|x-y|_{E}^{\lambda}z(y)dy\right)^{q}\stackrel{q<0}\leq\left(\int_{B_{E}\left(0,|x|_{E}\right)}|x|_{E}^{\alpha}|x-y|_{E}^{\lambda}z(y)dy\right)^{q}.
\end{equation*}
Therefore, we obtain
\begin{multline}\label{i1}
\left(\int_{\R}|x|_{E}^{\alpha q}\left(\int_{\R}|x-y|_{E}^{\lambda}z(y)dy\right)^{q}dx\right)^{\frac{1}{q}}\\
\stackrel{q<0}\geq\left(\int_{\R}|x|_{E}^{\alpha q}\left(\int_{B_{E}\left(0,|x|_{E}\right)}|x-y|_{E}^{\lambda}z(y)dy\right)^{q}dx\right)^{\frac{1}{q}}:=I^{\frac{1}{q}}_{1}.
\end{multline}
Similarly to (3.19), we have

\begin{multline}\label{i3}
\left(\int_{\R}|x|_{E}^{\alpha q}\left(\int_{\R}|x-y|_{E}^{\lambda}z(y)dy\right)^{q}dx\right)^{\frac{1}{q}}\\
\stackrel{q<0}\geq\left(\int_{\R}|x|_{E}^{\alpha q}\left(\int_{\R\setminus B_{E}(0,|x|_{E})}|x-y|_{E}^{\lambda}z(y)dy\right)^{q}dx\right)^{\frac{1}{q}}:=I^{\frac{1}{q}}_{2}.
\end{multline}
From (3.19)-(3.20), we obtain
\begin{equation}\label{i13}
\left(\int_{\R}|x|_{E}^{\alpha q}\left(\int_{\R}|x-y|_{E}^{\lambda}z(y)dy\right)^{q}dx\right)^{\frac{1}{q}}\geq I^{\frac{1}{q}}_{1},
\end{equation}
and 
\begin{equation}\label{i14}
\left(\int_{\R}|x|_{E}^{\alpha q}\left(\int_{\R}|x-y|_{E}^{\lambda}z(y)dy\right)^{q}dx\right)^{\frac{1}{q}}\geq I^{\frac{1}{q}}_{2}.
\end{equation}

\textbf{Step 1.} Let us prove (a) for (3.21).  By using $|y|_{E}\leq |x|_{E}$, we get
\begin{equation}
|x-y|_{E}\leq |x|_{E} +|y|_{E}\leq 2|x|_{E}.
\end{equation}
Then for any $\lambda<0$, we have
$$2^{\lambda}|x|_{E}^{\lambda}\leq |x-y|_{E}^{\lambda}.$$
Therefore, we get
\begin{equation*}
I_{1}=\int_{\R}|x|_{E}^{\alpha q}\left(\int_{B_{E}\left(0,|x|_{E}\right)}|x-y|_{E}^{\lambda}z(y)dy\right)^{q}dx
\leq 2^{\lambda q}\int_{\R}|x|_{E}^{(\alpha+\lambda)q}\left(\int_{B_{E}\left(0,|x|_{E}\right)}z(y)dy\right)^{q}dx.
\end{equation*}
If condition (2.4) in Theorem 2.2 with $u(x)=|x|_{E}^{(\alpha+\lambda)q}$ and $v(y)=|y|_{E}^{-\beta p}$ in (2.3) is satisfied, then we have
\begin{equation*}
I_{1}\leq C\int_{\R}\left(\int_{B_{E}\left(0,|x|_{E}\right)}z(y)dy\right)^{q}|x|_{E}^{(\alpha+\lambda)q}dx
\leq C\left(\int_{\R}|y|_{E}^{-\beta p}z^{p}(y)dy\right)^{\frac{q}{p}}.
\end{equation*}
Let us verify that the condition (2.4) holds. By using the assumption $\beta>-\frac{n}{p'}$, we obtain
$$0=\frac{1}{p'}+\frac{1}{q}+\frac{\alpha+\beta+\lambda}{n}>\frac{1}{q}+\frac{\alpha+\lambda}{n},$$
that is, $\frac{n+(\alpha+\lambda)q}{nq}<0,$ or $n+(\alpha+\lambda)q>0,$. Then, we get
\begin{equation*}
\begin{split}
\left(\int_{B_{E}\left(0,|x|_{E}\right)}u(y)dy\right)^{\frac{1}{q}}&=
\left(\int_{B_{E}\left(0,|x|_{E}\right)}|y|_{E}^{(\alpha+\lambda)q}dy\right)^{\frac{1}{q}}
=\left(\frac{|\mathfrak{S}|}{n+(\alpha+\lambda)q)}\right)^{\frac{1}{q}} |x|^{\frac{n+(\alpha+\lambda)q}{q}}.
\end{split}
\end{equation*}
Since $\beta>-\frac{n}{p'}$, we have
$$-\beta p(1-p')+n=\beta p'+n>0.$$
Thus $-\beta p(1-p')+n>0$. Then, a direct computation gives
\begin{equation}
\begin{split}
\left(\int_{B_{E}\left(0,|x|_{E}\right)}v^{1-p'}(y)dy\right)^{\frac{1}{p'}}&=\left(\int_{ B_{E}\left(0,|x|_{E}\right)}|y|_{E}^{-\beta p(1-p')}dy\right)^{\frac{1}{p'}}\\&= \left(\frac{|\mathfrak{S}|}{\beta p'+n}\right)^{\frac{1}{p'}} |x|_{E}^{\frac{\beta p'+n}{p'}}.
\end{split}
\end{equation}
Therefore by using $\frac{1}{p'}+\frac{1}{q}+\frac{\alpha+\beta+\lambda}{n}=0$, we have
\begin{equation*}\label{a1}
\begin{split}
\mathcal{D}_{1}(|x|_{E})&=\left(\int_{B_{E}\left(0,|x|_{E}\right)}u(y)dy\right)^{\frac{1}{q}}\left(\int_{ B_{E}\left(0,|x|_{E}\right)}v^{1-p'}(y)dy\right)^{\frac{1}{p'}}\\&
=\left(\frac{|\mathfrak{S}|}{n+(\alpha+\lambda)q}\right)^{\frac{1}{q}}\left(\frac{|\mathfrak{S}|}{\beta p'+n}\right)^{\frac{1}{p'}},
\end{split}
\end{equation*}
which means $\mathcal D_{1}(|x|_{E})$ is a non-decreasing function. Therefore,
\begin{equation}
D_{1}=\inf_{x\neq 0} \mathcal D_{1}(|x|_{E})=\left(\frac{|\mathfrak{S}|}{n+(\alpha+\lambda)q}\right)^{\frac{1}{q}}\left(\frac{|\mathfrak{S}|}{\beta p'+n}\right)^{\frac{1}{p'}} >0.
\end{equation}
Then by using (2.3), we obtain
\begin{equation}\label{I1}
I^{\frac{1}{q}}_{1}\geq  C\left(\int_{\R}|y|_{E}^{-\beta p}z^{p}(y)dy\right)^{\frac{1}{p}}=C\left(\int_{\R}h^{p}(y)dy\right)^{\frac{1}{p}}.
\end{equation}

\textbf{Step 2.} Let us prove (b) for (3.22). From $|x|_{E}\leq |y|_{E}$,  we calculate
$$|x-y|_{E}\leq |x|_{E}+|y|_{E}\leq 2|y|_{E},$$
then
$$ |x-y|_{E}^{\lambda}\geq C|y|_{E}^{\lambda},$$
where $C>0$.
Then, if condition (2.22) with $u(x)=|x|_{E}^{\alpha q}$ and $v(y)=|y|_{E}^{-(\beta+\lambda)p}$ is satisfied, then we have
\begin{equation*}
I_{2}
\leq C\int_{\R}|x|_{E}^{\alpha q}\left(\int_{\R\setminus B_{E}(0,|x|_{E})}z(y)|y|_{E}^{\lambda}dy\right)^{q}dx
\leq C\left(\int_{\R}|y|_{E}^{-\beta p}z^{p}(y)dy\right)^{\frac{q}{p}}.
\end{equation*}
Now let us check that the condition (2.22) holds. We have
\begin{equation*}
\begin{split}
\left(\int_{\R\setminus B_{E}(0,|x|_{E})}u(y)dy\right)^{\frac{1}{q}}=\left(\int_{\R\setminus B_{E}(0,|x|_{E})}|y|_{E}^{\alpha q}dy\right)^{\frac{1}{q}}
&=\left(\int^{\infty}_{|x|_{E}}\int_{\mathfrak{S}}r^{\alpha q}r^{n-1}drd\sigma\right)^{\frac{1}{q}}\\&
= \left(\frac{|\mathfrak{S}|}{|n+\alpha q|}\right)^{\frac{1}{q}} |x|_{E}^{\frac{n+\alpha q}{q}},
\end{split}
\end{equation*}
where $n+\alpha q<0$. From $\alpha>-\frac{n}{q}$, we have $0=\frac{1}{p'}+\frac{1}{q}+\frac{\alpha+\beta+\lambda}{n}>\frac{1}{p'}+\frac{\beta+\lambda}{n},$
then
\begin{equation}\label{betalambda}
(\beta+\lambda)p'+n<0.
\end{equation}
By using this fact, we have
\begin{equation*}
\begin{split}
\left(\int_{\R\setminus B_{E}(0,|x|_{E})}v^{1-p'}(y)dy\right)^{\frac{1}{p'}}&=\left(\int_{\R\setminus B_{E}(0,|x|)}|y|_{E}^{-(\beta+\lambda)(1-p')p}dy\right)^{\frac{1}{p'}}\\&
= \left(\frac{|\mathfrak{S}|}{|n+(\beta+\lambda)p'|}\right)^{\frac{1}{p'}} |x|_{E}^{\frac{n+(\beta+\lambda)p'}{p'}}.
\end{split}
\end{equation*}
Then by using $\frac{1}{p'}+\frac{1}{q}+\frac{\alpha+\beta+\lambda}{n}=0$, we get
\begin{equation}
\begin{split}
    \mathcal{D}_{2}(|x|_{E})&=\left(\frac{|\mathfrak{S}|}{|n+\alpha q|}\right)^{\frac{1}{q}}\left(\frac{|\mathfrak{S}|}{|n+(\beta+\lambda)p'|}\right)^{\frac{1}{p'}},
\end{split}
\end{equation}
which means $\mathcal{D}_{2}(|x|_{E})$ is a non-increasing function.
Therefore, we have
\begin{equation}\label{a2}
\begin{split}
D_{2}&=\inf_{x\neq 0}\mathcal{D}_{2}(|x|_{E})=
\left(\frac{|\mathfrak{S}|}{|n+\alpha q|}\right)^{\frac{1}{q}}\left(\frac{|\mathfrak{S}|}{|n+(\beta+\lambda)p'|}\right)^{\frac{1}{p'}}>0.
\end{split}
\end{equation}
Then, we have
\begin{equation}\label{I3}
I^{\frac{1}{q}}_{2}\geq C\left(\int_{\R}|y|_{E}^{-\beta p}z^{p}(y)dy\right)^{\frac{1}{p}}=C\left(\int_{\R}h^{p}(y)dy\right)^{\frac{1}{p}}.
\end{equation}
\end{proof}
\begin{rem}
Inequality (3.18) seems  to be new even in the  Euclidean space.
\end{rem}
Let us now show the reverse Stein-Weiss  inequality $\G$.
\begin{thm}[The reverse Stein-Weiss inequality on $\G$]\label{stein-weiss5}

Let $\mathbb{G}$ be a homogeneous group of homogeneous dimension $Q\geq1$ and let $|\cdot|$ be an arbitrary homogeneous quasi-norm on $\mathbb{G}$.
Assume that $q\leq p<0$, $\lambda<0$,  and $\frac{1}{p'}+\frac{1}{q}+\frac{\alpha+\beta+\lambda}{Q}=0$, where $\frac{1}{p}+\frac{1}{p'}=1$ and $\frac{1}{q}+\frac{1}{q'}=1$. Then for all non-negative functions $f\in L^{q'}(\mathbb{G})$ and $0<\int_{\G}h^{p}(x)dx<\infty$,
we have
\begin{equation}\label{stein-weiss124}
    \int_{\G}\int_{\G}|x|^{\alpha}f(x)|y^{-1}x|^{\lambda}h(y)|y|^{\beta}dxdy\geq C\left(\int_{\G}f^{q'}(x)dx\right)^{\frac{1}{q'}}\left(\int_{\G}h^{p}(x)dx\right)^{\frac{1}{p}},
\end{equation}
if  one of the following conditions is satisfied:
\begin{itemize}
	\item[(a)] $\beta>-\frac{Q}{p'}$;

	\item[(b)]  $ \alpha>-\frac{Q}{q}$.

\end{itemize}
\end{thm}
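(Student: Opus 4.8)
The plan is to follow verbatim the strategy of Theorem \ref{stein-weiss5euc}, replacing the two Euclidean ingredients by their homogeneous-group counterparts: the quasi-triangle inequality of Proposition \ref{prop_quasi_norm} in place of the ordinary triangle inequality, and the polar decomposition \eqref{EQ:polar} in place of Euclidean polar coordinates. First I would apply the reverse Hölder inequality (Theorem \ref{Hol}) with exponents $q,q'$ in the $x$-variable to peel off $f$, giving
\[
\int_{\G}\int_{\G}|x|^{\alpha}f(x)|y^{-1}x|^{\lambda}h(y)|y|^{\beta}\,dy\,dx \geq \left(\int_{\G}\left(\int_{\G}|x|^{\alpha}|y^{-1}x|^{\lambda}h(y)|y|^{\beta}\,dy\right)^{q}dx\right)^{\frac1q}\|f\|_{L^{q'}(\G)}.
\]
So it suffices to bound the first factor below by $C\left(\int_{\G}h^{p}\right)^{1/p}$. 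Substituting $z(y)=h(y)|y|^{\beta}$, this reduces (after raising to the negative power $q$) to proving
\[
\int_{\G}|x|^{\alpha q}\left(\int_{\G}|y^{-1}x|^{\lambda}z(y)\,dy\right)^{q}dx \leq C\left(\int_{\G}|y|^{-\beta p}z^{p}(y)\,dy\right)^{q/p}.
\]

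Next, since $q<0$, shrinking the domain of the inner integral only increases its $q$-th power, so I would estimate from below by restricting $y$ to $B(0,|x|)$ (for case (a)) or to $\G\setminus B(0,|x|)$ (for case (b)). On $B(0,|x|)$ Proposition \ref{prop_quasi_norm} gives $|y^{-1}x|\leq C(|y^{-1}|+|x|)=C(|y|+|x|)\leq 2C|x|$, and since $\lambda<0$ this yields the pointwise bound $|y^{-1}x|^{\lambda}\geq (2C)^{\lambda}|x|^{\lambda}$, replacing the kernel by a constant multiple of $|x|^{\lambda}$; symmetrically, on $\G\setminus B(0,|x|)$ one has $|x|\leq|y|$ and hence $|y^{-1}x|^{\lambda}\geq (2C)^{\lambda}|y|^{\lambda}$. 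This reduces case (a) to the reverse integral Hardy inequality of Theorem \ref{integralhar1} with $u(x)=|x|^{(\alpha+\lambda)q}$ and $v(y)=|y|^{-\beta p}$, and case (b) to the conjugate inequality of Theorem \ref{integralhar2} with $u(x)=|x|^{\alpha q}$ and $v(y)=|y|^{-(\beta+\lambda)p}$.

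In each case the only remaining task is to verify the hypotheses of the relevant Hardy inequality: the positivity and finiteness of $D_1$ (resp. $D_2$) and the monotonicity of $\mathcal D_1$ (resp. $\mathcal D_2$). Using the polar formula \eqref{EQ:polar}, every weighted ball integral evaluates as
\[
\int_{B(0,|x|)}|y|^{\gamma}\,dy=\frac{|\S|}{Q+\gamma}|x|^{Q+\gamma}\quad(Q+\gamma>0),\qquad \int_{\G\setminus B(0,|x|)}|y|^{\gamma}\,dy=\frac{|\S|}{|Q+\gamma|}|x|^{Q+\gamma}\quad(Q+\gamma<0),
\]
so convergence is governed entirely by the sign of $Q+\gamma$. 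A short check shows that the hypothesis $\beta>-Q/p'$ is exactly what makes $Q+(\alpha+\lambda)q>0$ and $Q+\beta p'>0$ in case (a), while $\alpha>-Q/q$ makes $Q+\alpha q<0$ and $Q+(\beta+\lambda)p'<0$ in case (b). The balancing relation $\tfrac1{p'}+\tfrac1q+\tfrac{\alpha+\beta+\lambda}{Q}=0$ then forces the power of $|x|$ in $\mathcal D_1$ (resp. $\mathcal D_2$) to vanish, so that it is constant in $|x|$, hence trivially non-decreasing (resp. non-increasing) and with $D_1,D_2>0$. Applying Theorems \ref{integralhar1} and \ref{integralhar2} then closes each case. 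The main — indeed essentially the only — point of difference from the Euclidean argument is bookkeeping of the quasi-norm constant $C$ from Proposition \ref{prop_quasi_norm}: it enters solely through the harmless prefactor $(2C)^{\lambda}$ and plays no role in any of the exponent computations, so no new obstacle arises.
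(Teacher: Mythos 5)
Your proposal is correct and follows exactly the paper's approach: the paper's own proof of this theorem is literally the instruction to repeat the Euclidean argument of Theorem \ref{stein-weiss5euc} with Proposition \ref{prop_quasi_norm} replacing the triangle inequality and \eqref{EQ:polar} replacing Euclidean polar coordinates, which is precisely what you carry out (same reverse H\"older step, same substitution $z=h|\cdot|^{\beta}$, same restriction to $B(0,|x|)$ and $\G\setminus B(0,|x|)$, and the same reductions to Theorems \ref{integralhar1} and \ref{integralhar2} with the same weights). Your verification of the exponent conditions in cases (a) and (b), including the vanishing of the $|x|$-power in $\mathcal{D}_1$ and $\mathcal{D}_2$ via the balancing relation, matches the paper's computations.
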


\begin{proof}
The proof of similar to the previous theorem,  but here we use Proposition \ref{prop_quasi_norm} and the polar decomposition formula \eqref{EQ:polar}.
\end{proof}




\begin{thebibliography}{9}
\bibitem{AF}
R.~A.~Adams and J. J. F. Fournier. Sobolev spaces. Vol. 140. Elsevier, 2003.
\bibitem{BH}
P. R. Beesack and H. P. Heinig. Hardy's inequalities with indices less than
1. {\em Proc. Amer. Math. Soc.,} 83(3):532--536, 1981.
\bibitem{B15}
W. Beckner. Functionals for multilinear fractional embedding. {\em Acta Math. Sin. (Engl.Ser.,)} 31(1):1--28, DOI 10.1007/s10114-015-4321-6, 2015.
\bibitem{CLT}
L.~Chen, G.~Lu and C.~Tao.  Reverse Stein--Weiss Inequalities on the Upper Half Space and the Existence of Their Extremals. {\em Advanced Nonlinear Studies}, 19(3):475-494, 2019.
\bibitem{CLT1}
L.~Chen, Z.~ Liu, G.~Lu, and C.~Tao.  Reverse Stein-Weiss inequalities and existence of their extremal functions. {\em Transactions of the American Mathematical Society}, 370(12):8429--8450, 2018.
\bibitem{CDDFF}
J. A. Carrillo, M. G. Delgadino, J. Dolbeault, R. L. Frank and F. Hoffmann.  Reverse Hardy-Littlewood-Sobolev inequalities. {\em Journal de Math\'{e}matiques Pures et Appliqu\'{e}es}, 132:133--165, 2019.
\bibitem{Dav99}
E.~B. Davies.
 A review of {H}ardy inequalities.
 In {\em The {M}az'ya anniversary collection, {V}ol. 2 ({R}ostock,
  1998)}, volume 110 of {\em Oper. Theory Adv. Appl.}, pages 55--67.
  Birkh\"auser, Basel, 1999.
  \bibitem{JZ}
J. Dou and M. Zhu. Reversed Hardy-Littewood-Sobolev inequality. {\em Int. Math. Res.
Not. IMRN}, 19:9696--9726, 2015.
\bibitem{DHK}
P. Dr\'{a}bek, H. P. Heinig and A. Kufner. {\em Higher-dimensional Hardy inequality, in: General Inequalities.} 7 (Oberwolfach 1995),
Internat. Ser. Numer. Math. 123, Birkhauser, Basel, 3--16, 1997.
\bibitem{EE04}
D.~E. Edmunds and W.~D. Evans.
{\em Hardy operators, function spaces and embeddings}.
 Springer Monographs in Mathematics. Springer-Verlag, Berlin, 2004.
\bibitem{FL12}
R.~L.~Frank and E.~H~Lieb.
 Sharp constants in several inequalities on
the Heisenberg group.
 {\em Ann. of Math.}, 176:349--381, 2012.
\bibitem{FR}
V.~Fischer and M.~Ruzhansky.
 {\em Quantization on nilpotent {L}ie groups}, volume 314 of {\em
  Progress in Mathematics}.
 Birkh\"auser. (open access book), 2016.

	\bibitem{FS74}
G.~B. Folland and E.~M. Stein.
 Estimates for the $\overline{\partial_{b}}$ complex and
analysis on the Heisenberg group.
 {\em Comm. Pure Appl. Math.}, 27:429--522, 1974.

\bibitem{FS1}
G.~B. Folland and E.~M. Stein.
 {\em Hardy spaces on homogeneous groups}, volume~28 of {\em
  Mathematical Notes}.
 Princeton University Press, Princeton, N.J.; University of Tokyo
  Press, Tokyo, 1982.

\bibitem{DV1}
S.~Gallot, D.~Hulin, and J.~Lafontaine.
 {\em Riemannian geometry}.
 Universitext. Springer-Verlag, Berlin, third edition, 2004.

\bibitem{GKPW04}
A.~Gogatishvili, A.~Kufner, L.-E. Persson, and A.~Wedestig.
 An equivalence theorem for integral conditions related to {H}ardy's
  inequality.
 {\em Real Anal. Exchange}, 29(2):867--880, 2003/04.

\bibitem{GKK}
Gogatishvili, A., K. Kuliev, and G. Kulieva. Some conditions characterizing the "reverse" Hardy inequality.  {\em Real Anal. Exchange}, 33(1): 249-258, 2008.

\bibitem{HLZ}
X.~Han, G.~Lu and  J.~Zhu.
 Hardy-Littlewood-Sobolev and Stein-Weiss inequalities and integral systems on the Heisenberg group.
 {\em Nonlinear Analysis}, 75, 4296--4314, 2012.
\bibitem{Har20}
G.~H. Hardy.
 Note on a theorem of Hilbert.
 {\em Math. Z.}, 6(3-4):314--317, 1920.
\bibitem{HL28}
G.~H.~Hardy and J.~E.~Littlewood.
 Some properties of fractional integrals.
 {\em I. Math. Z.}, 27:565--606, 1928.
\bibitem{DV3}
S.~Helgason.
 {\em Differential geometry, {L}ie groups, and symmetric spaces},
  volume~34 of {\em Graduate Studies in Mathematics}.
 American Mathematical Society, Providence, RI, 2001.
 Corrected reprint of the 1978 original.
\bibitem{KRS}
 A. Kassymov, M. Ruzhansky and D. Suragan. Hardy-Littlewood-Sobolev and Stein-Weiss inequalities on homogeneous Lie groups. {\em Integral Transform. Spec. Funct.}, 30(8):643--655, 2019.
\bibitem{KRSin}
A. Kassymov, M. Ruzhansky and D. Suragan. Reverse integral Hardy inequality on metric measure spaces.
{\em Annales Fennici Mathematici}, 47(1):39–55. 2022.
\bibitem{KRS2}
A. Kassymov, M. Ruzhansky and D. Suragan.  Reverse Stein-Weiss, Hardy-Littlewood-Sobolev, Hardy, Sobolev and Caffarelli-Kohn-Nirenberg inequalities on homogeneous groups. {\em Forum Mathematicum,}, 34(5):1147--1158, 2022.
\bibitem{KKK08}
A.~Kufner, K.~Kuliev and G.~Kulieva. 
  {The Hardy inequality with one negative parameter.} 
{\em Banach J. Math. Anal.,}2(2):76--84, 2008.
\bibitem{KP03}
A.~Kufner and L.-E. Persson.
 {\em Weighted inequalities of {H}ardy type}.
 World Scientific Publishing Co., Inc., River Edge, NJ, 2003.
\bibitem{KMP}
A.~Kufner, L. Maligranda and L.-E. Persson. {\em The Hardy inequality: About its history and some related results}, Vydavatelsk\'{y} servis, 162pp., 2007.
\bibitem{KPS17}
A.~Kufner, L.-E. Persson, and N.~Samko.
 {\em Weighted inequalities of {H}ardy type}.
World Scientific Publishing Co. Pte. Ltd., Hackensack, NJ, second
  edition, 2017.
  \bibitem{KK}
 A. Kufner and K. Kuliev, The Hardy inequality with "negative powers".
{\em Adv. Alg. Anal.,} 1(3): 219–228, 2006.
\bibitem{Lie83}
E.~H.~Lieb.
 Sharp constants in the Hardy-Littlewood-Sobolev and related
inequalities.
 {\em Ann. of Math.}, 118:349--374, 1983.


\bibitem{NN}
Q. A. Ng\^{o} and V. Nguyen. Sharp reversed Hardy-Littlewood-Sobolev inequality on
$\mathbb{R}^n$. {\em Israel J. Math.}, 220:189--223, 2017.
\bibitem{OK90}
B.~Opic and A.~Kufner.
 {\em Hardy-type inequalities}, volume 219 of {\em Pitman Research
  Notes in Mathematics Series}.
 Longman Scientific \& Technical, Harlow, 1990.
\bibitem{ORS}
T.~Ozawa, M.~Ruzhansky, D.~Suragan. $L^{p}$-Caffarelli-Kohn-Nirenberg type inequalities on homogeneous groups. {\em Quart. J. Math.,} 70: 305--318, 2019.

\bibitem{Pro}
D. V. Prokhorov, Weighted Hardy’s inequalities for negative indices.  {\em Publ.
Mat.,} 48 :423--443, 2004.

\bibitem{RV}
M.~Ruzhansky and D.~Verma. Hardy inequalities on metric measure spaces.
{\em Proc. R. Soc. A.,} 475(2223):20180310, 2019.
\bibitem{RV21}
M.~Ruzhansky and D.~Verma. Hardy inequalities on metric measure spaces, II: the case $p> q$. {\em Proc. R. Soc. A.,}  477(2250):20210136, 2021.

\bibitem{RS17}
M.~Ruzhansky and D.~Suragan.
 Hardy and {R}ellich inequalities, identities, and sharp remainders on
  homogeneous groups.
 {\em Adv. Math.}, 317:799--822, 2017.

\bibitem{RS18}
M.~Ruzhansky and D.~Suragan.
 {\em Hardy inequalities on homogeneous groups}.
Progress in Math. Vol. 327, Birkh\"{a}user, 588 pp, 2019.

\bibitem{RSY18}
M.~Ruzhansky, D.~Suragan, and N.~Yessirkegenov.
 Extended {C}affarelli-{K}ohn-{N}irenberg inequalities, and
  remainders, stability, and superweights for {$L^p$}-weighted {H}ardy
  inequalities.
 {\em Trans. Amer. Math. Soc. Ser. B}, 5:32--62, 2018.

\bibitem{RY18a}
M.~Ruzhansky, D.~Suragan, and N.~Yessirkegenov.
Sobolev type inequalities, Euler-Hilbert-Sobolev and Sobolev-Lorentz-Zygmund spaces on homogeneous groups. 
 {\em  Integr. Equ. Oper. Theory}, 90:10, 2018.

\bibitem{RY18b}
M.~Ruzhansky and N.~Yessirkegenov.
 Hypoelliptic functional inequalities.
 {\em https://arxiv.org/abs/1805.01064}, 2018.
\bibitem{Sob38}
S.~L.~Sobolev.
 On a theorem of functional analysis.
 {\em Mat. Sb. (N.S.)}, 4:471--479, 1938,
 English transl. in Amer. Math. Soc. Transl. Ser. 2, 34:39--68, 1963.
\bibitem{StWe58}
E.~M.~Stein and G.~Weiss.
 Fractional integrals on $n$-dimensional Euclidean Space.
 {\em J. Math. Mech.}, 7(4):503--514, 1958.

\end{thebibliography}
\end{document}